\newtheorem{thm}{Theorem}[section]
\newtheorem{cor}[thm]{Corollary} 
\newtheorem{lem}[thm]{Lemma}
\newtheorem{rem}[thm]{Remark} 
\newtheorem{exam}[thm]{Example} 
\newtheorem{definition}[thm]{Definition}
\newcommand{\A}{\mathcal{A}}
\newcommand{\K}{\mathcal{K}}
\newcommand{\F}{\mathcal{F}}
\newcommand{\Ost}{\mathcal{O}}
\newcommand{\Proj}{\mathbb{P}}
\newcommand{\C}{\mathbb{C}}
\newcommand{\Z}{\mathbb{Z}}
\newcommand{\R}{\mathbb{R}}
\newcommand\Dual{\textup{\textrm{Dual}\,}}
\def\t{\noindent}
\def\bc{\mbox{\boldmath $c$}}
\def\b0{\mbox{\boldmath $0$}}
\title[Thom polynomials in $\A$ classification]
{Thom polynomials in $\A$-classification I:   
counting singular projections of a surface}
\author[T.~Sasajima, T.~Ohmoto]{Takahisa Sasajima and Toru Ohmoto 
\thanks{
The authors are grateful to L. Feh\'er and Y. Kabata for useful discussions. 
The second author thanks H. Hauser and University of Vienna 
for their hospitality during the writing of the final form of this paper. 
This work was supported by JSPS KAKENHI Grant Numbers 24340007 and 15K13452. }}
\begin{document}
\emph{Dedicated to Professor Piotr Pragacz on the occasion of his 60th birthday.}

\begin{abstract} 
We study universal polynomials of characteristic classes associated to 
the $\A$-classification of  map-germs  $(\C^2,0) \to (\C^n, 0)$ $(n=2,3)$, 
that enable us to systematically generalize 
enumerative formulae 
in classical algebraic geometry of projective surfaces  
in $3$ and $4$-spaces. 
\end{abstract}

\begin{classification}
Primary 14N10; Secondary 57R20X.
\end{classification}
%
%
\begin{keywords}
$\A$-classification of map-germs, Thom polynomials, 
classical enumerative geometry,  projective surfaces. 
\end{keywords}


%
%
\maketitle

%


\section{Introduction}
In this paper, we study universal polynomials of characteristic classes associated to 
$\A$-classification theory of holomorphic map-germs $(\C^m,0) \to (\C^n,0)$ 
in low dimensions. 
Our aim is to apply them to classical enumerative geometry. 
In the singularity theory of maps, 
it is natural to identify two map-germs  if they coincide through 
suitable local coordinate changes of the source and the target, 
that is called the \textit{$\A$-equivalence} of map-germs. 
A map-germ is called to be {\it stable} if any deformation of the germ is 
(parametrically) $\A$-equivalent to the trivial one, and 
a holomorphic map $f: X \to Y$ between complex manifolds 
is {\it locally stable} if the germ of $f$ at any point $x \in X$ is stable. 
To each $\A$-singularity type of map-germs, 
one can assign a unique universal polynomial in Chern classes, 
called the \textit{Thom polynomial} 
\cite{FR02, FR, HK, Kaz03, Ohmoto14, Porteous, Rimanyi, Thom, Kleiman}; 
it is defined by the $\A$-equivariant Poincar\'e dual to the $\A$-orbit closure 
in the  space of all map-germs $(\C^m,0) \to (\C^n,0)$. 
As a typical example, 
for the stable singularity type $S_0: (x, y^2, xy)$ (crosscap) of 
map-germ $(\C^2,0)\to(\C^3,0)$, 
the Thom polynomial is given by 
$$
Tp_\A(S_0)=\bar{c}_2=c_1^2-c_2-c_1c_1'+c_2'
$$
where $\bar{c}=1+\bar{c}_1+\bar{c}_2+\cdots =(1+c'_1+\cdots)(1+c_1+\cdots)^{-1}$. 
This counts the number of crosscaps 
in a given locally stable map from a surface into a $3$-fold -- 
precisely saying, 
for any locally stable map $f:X\to Y$ with $\dim Y=\dim X+1$, 
$Tp_\A(S_0)$ evaluated by $c_i=c_i(TX)$ and $c'_j=f^*c_j(TY)$ 
expresses the homology class of the $S_0$-singularity locus  of $f$ in $H^*(X)$. 

Thom polynomials of stable singularity types of maps have been well studied 
(see Feh\'er-Rim\'anyi \cite{FR02, FR}, Kazarian \cite{Kaz03}), 
while those of {\it unstable} $\A$-singularity types have 
been little known so far. 
Our first purpose is to compute Thom polynomials 
for the $\A$-classification 
of (unstable) $\A$-simple map-germs
\begin{center}
$(\C^2,0)\to (\C^2,0)$, $(\C^2,0)\to (\C^3,0)$ and $(\C^3,0)\to (\C^3,0)$ 
\end{center}
in low codimensions 
 (Theorem \ref{main_thm}). 
 To compute the precise form of Thom polynomials, 
we use an effective method due to R. Rim\'anyi  \cite{Rimanyi}, 
called \textit{restriction} or \textit{interpolation method}, 
and indeed it works well for singularities of low codimension (Remark \ref{quasi_homog}). 
For instance, for $\A$-types  
$$B_k\;  (x, y^2, x^2y+y^{2k+1}): (\C^2, 0) \to (\C^3, 0)$$
($k\ge 1$), 
their Thom polynomials are given by, e.g. 
\begin{align}
Tp_\A(B_1)&=&
 -3 c_1^3 + 4 c_1 c_2 + 4 c_1^2 c_1' - 2 c_2 c_1' - c_1 c_1'^2 - 3 c_1 c_2' + c_1' c_2' + c_3',\nonumber \\ 
 Tp_\A(B_2) &=& 
11 c_1^4 - 22 c_1^2 c_2 + c_2^2 - 17 c_1^3 c'_1 + 21 c_1 c_2 c'_1+ 7 c_1^2 {c'_1}^2 - 5 c_2 {c'_1}^2 \nonumber\\
&&- c_1 {c'_1}^3 + 10 c_1^2 c'_2- 5 c_1 c'_1 c'_2 + {c'_1}^2 c'_2 - {c'_2}^2 - 10 c_1 c'_3 + 4 c'_1 c'_3. 
\end{align}
Such a local singularity of maps generically appears in families of maps, 
and hence the associated characteristic class 
expresses the degeneracy locus of that type; 
this is regarded as a {\it non-linear version} of 
the famous Thom-Porteous formula 
for degeneracy loci of vector bundle morphisms \cite{FP}. 
Precisely, it is described as follows. 
Let $E$ and $F$ be holomorphic fiber bundles 
over a complex manifold $X$ with fiber $\C^m$ and $\C^n$, respectively. 
Let $\varphi$ be a holomorphic map 
from an open neighborhoods of the zero section $Z_E$ to $F$ 
so that $\varphi$ identifies the zero sections, $\varphi(Z_E)=Z_F$, 
and the following diagram commutes: 
$$
\begin{tikzcd}
(E, Z_E) \arrow{rd}\arrow{rr}{\varphi}
& &
(F, Z_F) \arrow{ld}  \\
& X  & 
\end{tikzcd}
$$
That is,  $\varphi$ defines a family of holomorphic map-germs 	
$\varphi_{x}:({E}_{x},0)\rightarrow ({F}_{x},0)$
parametrized by points of $X$ analytically. 
Let $\eta(\varphi)$ denote 
the locus consisting of $x \in X$ 
at which the germ $\varphi_x$ is $\A$-equivalent to 
a given type $\eta: (\C^m, 0) \to (\C^n,0)$, 
and it becomes a locally closed subvariety of $X$. 
Then, for an appropriately generic $\varphi$, 
the closure of the $\eta$-locus is universally expressed by 
$$
\Dual[\overline{\eta(\varphi)}]=Tp_\A(\eta)(c(E), c(F))\in H^*(X),
$$
where $c_i=c_i(E)$ and $c_j'=c_j(F)$ are 
the Chern classes of $E$ and $F$ respectively 
(Theorem \ref{thm:A-classification}).

We apply these universal polynomials to enumerative problems 
in classical algebraic geometry. 
As it is dealt in old literatures, 
e.g. Salmon \cite{Salmon}, Baker \cite{Baker}, Semple-Roth \cite{SR}, 
or reformulated within modern intersection theory 
in Piene \cite{Piene} (also Kleiman \cite{Kleiman}), 
it is a typical question to seek for formulae 
of numerical characters of a given projective surface; 
for instance, polar classes are indeed defined by 
the critical loci of general linear projections, 
so those should directly be connected with Thom polynomials \cite{Piene, SO}. 
Furthermore, 
richer structures in local extrinsic geometry of embedded surfaces  
in projective or affine spaces  
have been re-explored as an application of $\A$-classification theory of map-germs 
\cite{Arnold, DK, Kabata, Mond82, Mond, Anna, 
NT, Platonova, Rieger, SKSO, UV, YKO}; 
for instance, the contact of a line with a given surface  
is measured by the $\A$-equivalence type of its projection along the line. 
From this context, 
we study a counting problem of lines 
having some prescribed contact with an embedded surface. 
First, we rediscover classical formulae of Salmon-Cayley-Zeuthen    
for smooth surfaces in $\Proj^3$ 
and generalize them to the case of surfaces with ordinary singularities 
(Theorem \ref{dCT}, Remark \ref{Kulikov}). 
The same approach is also applied to smooth surfaces in $\Proj^4$ 
 (Theorem \ref{dCT2}); 
seemingly, this case was not well examined in old literatures. 
Indeed, the contact of lines has been studied rather recently 
by using the $\A$-classification of map-germs $(\C^2, 0) \to (\C^3, 0)$ 
in D. Mond's thesis \cite{Mond82}, see also \cite[Appendix]{DK}. 
Let $X$ be an appropriately generic surface in $\Proj^4$; 
the projection along almost all tangent lines $l$ is of crosscap $S_0$, 
while along the asymptotic line, 
the projection is of type $B_1(=S_1)$ or worse;  
in particular, the $B_2$-singularity and the $H_2$-singularity  
occur on curves on $X$ 
and these two curves meet each other tangentially at some discrete points 
where the $P_3$-singularity appears in the projection 
(that is an analogy to the {\it cusp of Gauss} 
on a surface in $3$-space). 
Our concern is to compute the degree of such special loci on $X$. 
To each pair $(x,l)$ with $x \in X$ and $x \in l$, 
we associate the germ at $x$ of 
the projection of $X$ along $l$ 
to a hyperplane $ \Proj^3_{(x,l)}$ transverse to $l$ 
passing through $x$, 
that yields a family of holomorphic map-germs 
$$\varphi: (X, x) \to (\Proj^3_{(x,l)}, x)$$ 
over the flag manifold of pairs $(x, l)$. 
We then apply $Tp_\A(\eta)$ to the family $\varphi$, 
and push it down to the ambient $\Proj^4$ to get the desired enumeration. 
In particular, if $X$ is 
a smooth complete intersection of degrees $d_1$ and $d_2$ in $\Proj^4$, 
the degree given by $Tp_\A(B_1)$ is equal to $2d_1d_2$ 
(that is obvious, because at every general point $x \in X$ 
there are exactly two asymptotic lines); 
the degrees of loci of types $B_2, H_2, P_3$ are now easily given by 
\begin{align*}
&B_2: d_1d_2(16d_1+16d_2-55),\quad 
H_2: d_1d_2(6d_1+6d_2-20), \\
&P_3: d_1d_2(18{d_1}^2+18{d_2}^2+51d_1d_2-160d_1-160d_2+280).
\end{align*}

Our method produces a bunch of enumerative formulae of this kind,  
and actually most of those are new. 
Rather than such formulae, 
the main point is to propose a general framework based on 
singularity theory for certain enumerative problems. 
We emphasize that there are a number of different objects  
in projective algebraic geometry 
to which the $\A$-classification of map-germs can suitably be applied, 
e.g.,  osculating planes of a space curve, 
subspaces having a prescribed contact with a hypersurface, 
singular points of a scroll, 
singularities of the focal surface associated to a line congruence or a line complex, 
and so on; 
it would be promising to develop classical works on enumerations 
of such objects  by means of our universal polynomials. 


\section{Thom polynomials in $\A$-classification}

\subsection{Universal classes of  singularity loci of prescribed local types}
Let $\pi_E: E \to X$ and $\pi_F: F \to X$ 
be holomorphic fiber bundles over a complex manifold $X$ 
with fibers $\C^m$, $\C^n$ and zero sections $Z_E$, $\Z_F$, respectively.  
A \textit{holomorphic family of map-germs} 
$\varphi: (E, Z_E)\rightarrow (F, Z_F)$ 
is defined to be the germ at $Z_E$ 
of a holomorphic map $\varphi: U \to F$ 
from an open neighborhood $U$ of $Z_E$  to $F$ 
so that $\pi_F\circ \varphi=\pi_E$ and  $\varphi: Z_E \simeq Z_F$. 
In fact, 
a weaker notion is sufficient for our purpose; 
we say that 
$\varphi$ a {\it continuous family of holomorphic map-germs} if 
$\varphi: U \to F$ is continuous and the restriction $\varphi_x: E_x \cap U \to F_x$ 
to each fiber is holomorphic. 
For simplicity, we work with holomorphic families of map-germs,  
unless specifically mentioned. 

\begin{exam}\upshape
A holomorphic map $f:X\rightarrow Y$ ($m=\dim X$, $n=\dim Y$) 
canonically induces a holomorphic family $\varphi$ of map-germs from $E=TX$ to $F=f^*TY$ 
which corresponds to the map  
$$
X\ni x\longmapsto f_x: (X,x)\rightarrow (Y,f(x)). 
$$
It may be given by the germ at the diagonal of $f\times f: X \times X \to Y\times Y$ 
with projections to the second factors.  
\end{exam}

\begin{definition}\upshape
Let $\eta$ be an $\A$-type of map-germs $(\C^m,0)\rightarrow (\C^n,0)$. 
For a family $\varphi: (E, Z_E)\rightarrow (F, Z_F)$ over $X$, 
we define the \textit{$\eta$-singularity locus of $\varphi$} by 
$$
	\eta(\varphi):=\{\,x\in X\ |\ \varphi_{x}\sim_{\A}\eta\,\}.
$$
\end{definition}

Let $J^r(E,F)$ denote the jet space of holomorphic map-germs 
$({E}_x, 0) \to ({F}_x, 0)$ over all $x\in X$, 
that is, 
$$J^r(E,F):=\bigoplus_{1\leq j\leq r}\textrm{Sym}^j(E^*)\otimes F.$$ 
Given an $\A^r$-invariant subset $\eta \subset J^r(m,n)$, there is 
an associated subbundle $\eta(E,F)$ in $J^r(E,F)$. 
The \textit{$r$-jet extension $j^r\varphi: X \to J^r(E,F)$} is naturally defined by 
assiging $j^r(\varphi_x)(0)$ to $x \in X$, and then 
put $\eta(\varphi):=j^r\varphi^{-1}(\eta(E,F))$. 
We say that $\varphi$ is \textit{admissible} with respect to $\eta$  if 
the equality  
$$\Dual[\overline{\eta(\varphi)}]=j^r\varphi^*(\Dual[\overline{\eta(E,F)}])$$
holds. 
For instance, when $j^r\varphi$ is transverse to  
(a Whitney stratification of) the closure $\overline{\eta(E,F)}$, 
$\varphi$ is admissible.

\begin{thm}[Thom polynomials in $\A$-classification 
\cite{Thom, HK}]\label{thm:A-classification}
For an $\A$-type $\eta$ of $(\C^m,0)\rightarrow (\C^n,0)$ with finite codimension $s$, 
there is a unique class 
$$Tp_{\A}(\eta)(\bc,\bc')\in\Z[c_1,\cdots,c_m,c'_1,\cdots,c'_n]$$
such that 
$$
\Dual[\overline{\eta(\varphi)}]=Tp_{\A}(\eta)(c(E),c(F))\in H^{2s}(X)
$$
for any admissible continuous family 
$\varphi: (E, Z_E)\rightarrow (F, Z_F)$  
of holomorphic map-germs 
with fibers $\C^m$ and $\C^n$. 
\end{thm}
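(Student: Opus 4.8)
The plan is to realize $Tp_{\A}(\eta)$ as an $\A^r$-equivariant fundamental class and then transfer it to an arbitrary admissible family by naturality. First, reduce to jets: since $\eta$ has finite codimension $s$ it is $r$-determined for some $r$, so its closure $\overline{\eta}$ is a closed $\A^r$-invariant complex subvariety of pure complex codimension $s$ of the affine jet space $J^r(m,n)$, on which $\A^r=\Rcal^r\times\Lcal^r$ acts algebraically. Sending an $r$-jet of a germ-change to its linear part deformation retracts $\A^r$ onto $GL_m(\C)\times GL_n(\C)$ (the kernel being a contractible unipotent group), which in turn retracts onto $U(m)\times U(n)$; hence $H^*_{\A^r}(\mathrm{pt})=H^*(B\A^r)=H^*(BU(m))\otimes H^*(BU(n))=\Z[c_1,\dots,c_m,c'_1,\dots,c'_n]$, with $c_i$ and $c'_j$ of degree $i$ and $j$. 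Because $J^r(m,n)$ is $\A^r$-equivariantly contractible, restriction gives an isomorphism $H^*_{\A^r}(J^r(m,n))\xrightarrow{\sim}H^*_{\A^r}(\mathrm{pt})$, and the invariant subvariety $\overline{\eta}$ determines an equivariant fundamental class $[\overline{\eta}]\in H^{2s}_{\A^r}(J^r(m,n))$ (constructed, as noted below, through finite-dimensional approximations). I would \emph{define} $Tp_{\A}(\eta)(\bc,\bc')$ to be the weighted-degree-$s$ polynomial representing $[\overline{\eta}]$; since $\Z[\bc,\bc']$ is a free polynomial ring this polynomial is unique, which already yields existence and uniqueness of the class.

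Second, prove the displayed formula by naturality. For a family $\varphi$ over $X$ the jet bundle $J^r(E,F)=\bigoplus_j\mathrm{Sym}^j(E^*)\otimes F$ is associated, via the action above, to the principal bundle of linear frames of $E$ and $F$, and $\eta(E,F)\subset J^r(E,F)$ is the associated subbundle with fibre $\overline{\eta}$. A classifying map $\kappa\colon X\to B\A^r$ for the frame bundle therefore lifts to a map $\Phi$ from $J^r(E,F)$ to the total space $\mathbf{J}$ of the universal $J^r(m,n)$-bundle $\pi\colon\mathbf{J}\to B\A^r$, under which $\eta(E,F)=\Phi^{-1}(\mathbf{J}_\eta)$ for the universal subbundle $\mathbf{J}_\eta$ with fibre $\overline{\eta}$; being a fibrewise isomorphism, $\Phi$ preserves Poincar\'e duals, so $\Phi^*\Dual[\mathbf{J}_\eta]=\Dual[\overline{\eta(E,F)}]$. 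Since $\pi$ has contractible fibres, $\pi^*\colon H^*(B\A^r)\xrightarrow{\sim}H^*(\mathbf{J})$ and $\Dual[\mathbf{J}_\eta]=\pi^*\,Tp_{\A}(\eta)(\bc,\bc')$ by construction, while $\pi\circ\Phi\circ j^r\varphi=\kappa$ classifies $(E,F)$. Pulling $\Dual[\mathbf{J}_\eta]$ back along $\Phi\circ j^r\varphi$ then gives $(j^r\varphi)^*\Dual[\overline{\eta(E,F)}]=Tp_{\A}(\eta)(c(E),c(F))$ in $H^{2s}(X)$, and by the very definition of admissibility the left-hand side equals $\Dual[\overline{\eta(\varphi)}]$. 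Only the fibrewise-holomorphic structure of $\varphi$ was used, so continuous families of holomorphic map-germs are covered as well.

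I expect the one genuine difficulty to be the step invoked in the first paragraph: attaching a bona fide $\A^r$-equivariant fundamental class to the generally singular orbit closure $\overline{\eta}$ under the non-compact, infinite-dimensional-type group $\A^r$. The way around it is to replace $\pi\colon\mathbf{J}\to B\A^r$ by a smooth finite-dimensional approximation $\mathbf{J}_N\to B_N$ with $N\gg s$: the corresponding $\overline{\eta}$-subbundle is then a complex-analytic subvariety of pure complex codimension $s$ of the smooth manifold $\mathbf{J}_N$, hence carries an ordinary fundamental class, stable under enlarging $N$ in degrees $\le 2s$; equivalently one may use the finite $\A^r$-invariant Whitney stratification of $\overline{\eta}$ furnished by finite determinacy. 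That $Tp_{\A}(\eta)$ is moreover \emph{characterized} by the displayed property follows because over $B_N$ the universal family is admissible while $c_1,\dots,c_m,c'_1,\dots,c'_n$ are algebraically independent through degree $2s$, so no other polynomial can satisfy the identity. Everything else — the homotopy reduction to the unitary groups, the recognition of $J^r(E,F)$ as an associated bundle, and the pullback compatibility of Poincar\'e duals — is then formal.
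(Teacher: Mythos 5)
Your argument is correct and is essentially the paper's own proof in equivariant-cohomology dress: defining $Tp_\A(\eta)$ as the dual of the $\overline{\eta}$-subbundle in the universal jet bundle over $B\A^r\simeq BGL(m)\times BGL(n)$ and pulling it back through the classifying map and the jet extension is exactly the construction given there. The extra care you take (finite-dimensional approximations for the fundamental class of the singular orbit closure, and the uniqueness argument via algebraic independence of the $c_i, c'_j$) only fills in details the paper leaves implicit.
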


\begin{proof} 
For the sake of completeness, we briefly give a proof, 
although it is straightforward from the definition \cite{HK}. 
There are the classifying maps $\rho_1: X \to BGL(m)$ and $\rho_2: X \to BGL(n)$ 
for linealized bundles (as topological vector bundles) 
$E \to X$ and $F \to X$, respectively, so that the pullback of 
Chern classes $\bc:=c(\xi_1)$ and $\bc'=c(\xi_2)$ of 
the universal vector bundles $\xi_i\; (i=1,2)$ are just $c(E)$ and $c(F)$.  
Put $B=BGL(m) \times BGL(n)$,   
and let $\pi_B: J^r(\xi_1, \xi_2)\to B$ denote 
the universal fiber bundles with fiber $J^r(m, n)$ 
and structure group $GL(m) \times GL(n) \sim \A^r_{m, n}$. 
The jet bundle  has the subbundle $\eta(\xi_1, \xi_2)$ with fiber $\eta$; then 
we can define 
$$Tp_\A(\eta)(\bc, \bc'):=\Dual\, [\overline{\eta(\xi_1, \xi_2)}]$$ 
in the cohomology which is canonically isomorphic to 
$$
H^*(B)\cong  \Z[c_1,\cdots,c_{m}]\otimes\Z[c'_1,\cdots,c'_{n}] =\Z[\bc, \bc']
$$
via the pullback of the projection $\pi_B$. 
Put $\rho=\rho_1 \times \rho_2: X \to B$, and $\bar{\rho}$ the bundle homomorphism over $\rho$. 
By the above construction, 
$$
\Dual\, [\overline{\eta(E,F)}]=\bar{\rho}^*\Dual\, [\overline{\eta(\xi_1, \xi_2)}]. 
$$
Hence 
\begin{align*}
\Dual\,[\overline{\eta(\varphi)}]
&= j\varphi^*(\Dual\, [\overline{\eta(E,F)}])
=j\varphi^*\bar{\rho}^*\Dual\, [\overline{\eta(\xi_1, \xi_2)}] \\
&=\rho^*Tp_\A(\eta)(\bc, \bc')
=Tp_\A(\eta)(c(E), c(F)).
\end{align*}
This completes the proof. 
\end{proof}

\

It is well-known that 
$f: X \to Y$ is \textit{locally stable}  if and only if 
the jet extension $jf$ is transverse to any $\A$-orbits; 
it is also equivalent to that $jf$ is transverse to any $\K$-orbits. 
Thom polynomials for $\K$-orbits (or $\A$-orbits of stable germs) 
have particularly simpler forms as follows. 
Let $\bar{c}_i$ denote the $i$-th term of $(1+c'_1+c'_2+\cdots)(1+c_1+c_2+\cdots)^{-1}$.

\begin{thm}[Thom polynomials for stable singularities, cf. \cite{FR02, Kaz03, Ohmoto14}]
For a stable singularity type $\eta: (\C^m, 0) \to (\C^m,0)$, 
$Tp_{\A}(\eta)$ is a polynomial in  the quotient Chern class $\bar{c}_i$; 
we write it simply by $Tp(\eta)$. 
In particular, for any locally stable map $f: X \to Y$, 
the $\eta$-singularity locus is expressed by the polynomial in 
$\bar{c}_i=c_i(f)=c_i(f^*TY-TX)$. 
\end{thm}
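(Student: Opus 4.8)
The plan is to deduce the statement from two facts about contact ($\K$-) equivalence: that for a stable germ the $\A$- and $\K$-Thom polynomials coincide, and that $\K$-Thom polynomials involve only the combination $\bar c=c(F)/c(E)$ (Damon's theorem on contact-class singularities). For the first point, I would recall from Mather's theory that for a stable germ $\eta:(\C^m,0)\to(\C^n,0)$ the $\A$-orbit of $\eta$ is Zariski open and dense in its $\K$-orbit inside $J^r(m,n)$ once $r$ is past the determinacy degree; this is the infinitesimal counterpart of the statement recalled just above that $jf$ is transverse to all $\A$-orbits iff it is transverse to all $\K$-orbits. Consequently $\overline{\A\cdot\eta}=\overline{\K\cdot\eta}$, the two induced subbundles of $J^r(\xi_1,\xi_2)$ coincide, and by the construction in the proof of Theorem~\ref{thm:A-classification} one gets $Tp_{\A}(\eta)=Tp_{\K}(\eta)$ in $\Z[\bc,\bc']$. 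It thus suffices to treat $\K$-orbit closures.

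Next I would use that the contact class is intrinsic: by Mather's classification, two germs $(\C^m,0)\to(\C^n,0)$ are $\K$-equivalent iff their local algebras $Q(f)=\mathcal{O}_m/f^{*}\fm_n$ are isomorphic as $\C$-algebras. Hence for a family $\varphi$ the locus $\eta(\varphi)$ is cut out by a condition on the ideal sheaf generated by the components of $\varphi$ in a local frame of $F$, together with the source; in particular $\eta(\varphi)$ is unchanged if one replaces $F$ by $F\oplus L$ and $\varphi$ by $\varphi\oplus\mathrm{id}_{L}$ for a line bundle $L$ (a trivial target direction does not affect $Q$), and likewise for $E$. Since $\bar c(F\oplus L-E\oplus L)=\bar c(F-E)$, this already shows that if $Tp_{\K}(\eta)$ can be written in the Chern data in a way compatible with such stabilisations, it can only depend on $\bar c$; it remains to prove that it can be written in the Chern data at all in the required form.

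To do this I would follow the Thom--Porteous pattern. One resolves the orbit closure $\overline{\eta(\xi_1,\xi_2)}\subset J^r(\xi_1,\xi_2)$ by an iterated Grassmann/flag bundle built out of $\mathrm{Sym}^{\le r}(\xi_1^{*})$, stratifying by the successive Boardman--Mather data of the ideal generated by the jet; over this tower $\overline{\eta(\xi_1,\xi_2)}$ becomes the zero locus of a section of a bundle of the shape $\bigoplus_i S_i^{*}\otimes\xi_2$ for tautological sub-bundles $S_i$ of the symmetric powers of $\xi_1^{*}$. Taking the top Chern class and pushing it down the tower, every contribution of $c(\xi_1)$ enters through Segre classes produced by the Grassmann/flag projections and combines with $c(\xi_2)$ into the single series $c(\xi_2)c(\xi_1)^{-1}=\bar c$; the case $\eta=\Sigma^{k}$, where this recovers exactly the Giambelli determinant in $c(\xi_2-\xi_1)$ of the classical Thom--Porteous formula, is the prototype. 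This last bookkeeping — that for an arbitrary contact class only $\bar c$ survives the pushforward — is the real content (it is Damon's theorem), and it is the step I expect to be the main obstacle; the expedient alternative is to invoke it directly, see \cite{FR02, Kaz03, Ohmoto14}.

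Finally, for the statement about locally stable maps: given a locally stable $f:X\to Y$, the canonical family $\varphi$ with $E=TX$ and $F=f^{*}TY$ (the Example in this section) has $jf$ transverse to every $\K$-orbit, hence $\varphi$ is admissible in the sense of Theorem~\ref{thm:A-classification}. That theorem then gives $\Dual[\overline{\eta(\varphi)}]=Tp_{\K}(\eta)(c(TX),c(f^{*}TY))=Tp(\eta)(\bar c)$, where $\bar c_i=c_i(f^{*}TY-TX)=c_i(f)$, which is the asserted formula.
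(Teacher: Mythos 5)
The paper offers no proof of this theorem at all: it is quoted as the classical Thom--Damon result, with the ``cf.''\ references \cite{FR02, Kaz03, Ohmoto14} standing in for an argument, so there is no internal proof to match yours against. Your reduction is the standard route and is sound as far as it goes: for a stable germ the $\A^r$-orbit of its $r$-jet is open and dense in its $\K^r$-orbit (Mather), so the two orbit closures, hence the corresponding subbundles of $J^r(\xi_1,\xi_2)$ and their dual classes, coincide and $Tp_{\A}(\eta)=Tp_{\K}(\eta)$; and local stability of $f$ gives transversality of the jet extension to all $\K$-orbits, hence admissibility of the canonical family $E=TX$, $F=f^*TY$, so Theorem \ref{thm:A-classification} applies and yields the statement about the $\eta$-locus of $f$. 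The one step that would not survive scrutiny is your sketched proof of the $\K$-side fact (dependence on $\bar{c}$ only): a general contact orbit closure is not realized as the zero locus of a section of a bundle of the shape $\bigoplus_i S_i^{*}\otimes\xi_2$ over a Grassmann/flag tower, so the Thom--Porteous pushforward bookkeeping does not extend as written; the actual proofs of Damon's theorem run through invariance of the singularity and its class under suspension by trivial unfoldings (essentially the stabilization observation you make) together with the existence of the universal class over the $\K$-classifying space, as in \cite{Kaz03, Ohmoto14}, rather than through an explicit resolution. Since you explicitly flag this and fall back on invoking the result with the same citations the paper uses, your write-up is at the same level of rigor as the paper, while correctly supplying the $\A$-to-$\K$ reduction and the admissibility argument that the paper leaves implicit.
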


\begin{exam}\upshape
For stable singularity types of $(\C^n, 0) \to (\C^n,0)$ (equidimensional maps) 
of codimension $\le 4$, $Tp$ are 
\begin{align*}
Tp(A_1)&=\bar{c}_1,\\
Tp(A_2)&=\bar{c}_1^2+\bar{c}_2,\\
Tp(A_3)&=  \bar{c}_1^3+3\bar{c}_1\bar{c}_2+\bar{c}_3,\\
Tp(A_4)&= \bar{c}_1^4+6\bar{c}_1^2\bar{c}_2+2\bar{c}_2^2+9\bar{c}_1\bar{c}_3+6\bar{c}_4,\\
Tp(I_{2,2})&=\bar{c}_2^2-\bar{c}_1\bar{c}_3.
\end{align*}
Also for stable singularities of $(\C^n, 0) \to (\C^{n+1}, 0)$, here are some examples:
\begin{align*}
Tp(A_1)&=\bar{c}_2,\\
Tp(A_2)&=\bar{c}_2^2+\bar{c}_1\bar{c}_3+\bar{c}_4.
\end{align*}
\end{exam}

\subsection{$\A$-classification of map-germs and main results}

In late 80's and 90's, there have been done the $\A$-classifications of 
map-germs in some particular dimensions. 
First we quote part of the classification results below. 

\begin{itemize}
\item 
Table \ref{table_2to2}:  $(\C^2,0)\rightarrow(\C^2,0)$ (Rieger \cite{Rieger})
\item 
Table \ref{table_2to3}:  $(\C^2,0)\rightarrow(\C^3,0)$  (Mond \cite{Mond, Mond82})
\item 
Table \ref{table_3to3}: $(\C^3,0)\rightarrow(\C^3,0)$  (Marar-Tari \cite{MararTari}, Hawes \cite{Hawes} etc)
\end{itemize}

\begin{table}
		\centering
        	\begin{tabular}{|c|c|l|}\hline 
		        Type &  $\A$-codim & Normal form \\ \hline \hline
		        Regular  & 0 & $(x,y)$ \\ 
		        Fold  & 1 & $(x,y^2)$ \\ 
		        Cusp  & 2 & $(x,xy+y^3)$ \\ \hline
		        Lips/Beaks  & 3 & $(x,y^3+x^2y)$ \\ 
		        Swallowtail  & 3 & $(x,xy+y^4)$ \\ 
		        Goose & 4 & $(x,y^3+x^3y)$\\ 
		        Butterfly & 4 & $(x,xy+y^5+y^7)$ \\ 
		        Gulls & 4 &  $(x,xy^2+y^4+y^5)$  \\ 
		        Sharksfin & 4 & $(x^2+y^3,x^3+y^2)$ \\ \hline
        	\end{tabular}	\vspace{8pt}
        	\caption{Map-germs $\C^2\to\C^2$ of $\A$-codimension $\leq 4$.} 
	\label{table_2to2}
\end{table}

\begin{table}
		\centering
		    \begin{tabular}{|c|c|l|}\hline 
		        Type &  $\A$-codim & Normal form \\ \hline \hline
		        Immersion  & 0 & $(x,y,0)$ \\ 
		        $S_0$  & 2 & $(x,y^2,xy)$ \\ \hline
		        $S_k$ & $k+2$ & $(x,y^2,y^3+x^{k+1}y)$\\ 
		        $B_k$ & $k+2$ & $(x,y^2,x^2y+y^{2k+1})$ \\ 
		        $H_k$ & $k+2$ & $(x,y^3,xy+y^{3k-1})$ \\ 
		        $C_3$ & 5 &  $(x,y^2,xy^3+x^3y)$  \\ 
		        $P_3$ & 5 & $(x,xy+y^3,xy^2+cy^4)$ 
		        \\ \hline
        	\end{tabular} \vspace{8pt}
        	\caption{Map-germs $\C^2\to\C^3$ of $\A$-codimension $\leq 5$ 
	($S_1=B_1=H_1$, $k\le 3$). 
	$P_3$ is unimodal with a parameter $c$.} \label{table_2to3}
\end{table}

\begin{table}
		\centering
	        \begin{tabular}{|c|c|l|}\hline 
		        Type &  $\A$-codim & Normal form \\ \hline \hline
		        $A_0$  & 0 & $(x,y,z)$ \\ 
		        $A_1$  & 1 & $(x,y,z^2)$ \\ 
		        $A_2$ & 2 & $(x,y,yz+z^3)$\\ 
		        $A_3$ & 3 & $(x,y,yz+xz^2+z^4)$\\ \hline 
		        $A_4$ & 4 & $(x,y,yz+xz^2+z^5)$\\ 
		        $C$ & 4 & $(x,y,z^3+(y^2+x^2)z)$\\ 
		        $D$ & 4 & $(x,y,yz+z^4+x^2z^2)$\\ 
		        $I_{2,2}$ & 4 &  $(x,yz,y^2+z^2+xy)$\\   \hline
	        \end{tabular} \vspace{8pt}
	    	\caption{Map-germs $\C^3\to\C^3$ of $\A$-codimension $\leq$ 4.} \label{table_3to3}
\end{table}

Our main result is to determine the precise form of $Tp_\A$ for 
$\A$-singularities in these classification. 

\begin{thm}\label{main_thm} 
The Thom polynomials $Tp_\A(\eta)$ for $\A$-singularities 
 in Tables \ref{table_2to2},  \ref{table_2to3} and  \ref{table_3to3} 
are given in Tables \ref{table_tp2to2},  \ref{table_tp2to3} and  \ref{table_tp3to3}, respectively. 
\end{thm}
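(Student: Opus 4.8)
The plan is to compute each $Tp_\A(\eta)$ by Rim\'anyi's \emph{restriction (interpolation) method}, reducing the determination of a universal polynomial in $\Z[\bc,\bc']$ to a finite system of linear equations. Concretely, for a fixed $\A$-type $\eta$ of codimension $s$ appearing in one of the three tables, one first chooses a quasi-homogeneous normal form (this is possible in all low-codimension cases, cf.\ Remark~\ref{quasi_homog}), so that the maximal compact subgroup of the symmetry group $\mathrm{Aut}_\A(\eta)$ contains a maximal torus $T_\eta$ acting linearly on the source $\C^m$ and the target $\C^n$ with explicit weights. The class $Tp_\A(\eta)$ is characterized, up to the degree bound $\deg = s$, by two kinds of constraints: (i) the \emph{principal equation} at $\eta$ itself, $Tp_\A(\eta)|_{T_\eta} = e(\text{normal slice})\cdot \prod(\text{source weights})/\prod(\text{target weights})$ read off from the $T_\eta$-module structure of the jet space transverse to the orbit; and (ii) the \emph{vanishing equations} at every other $\A$-type $\xi$ of codimension $\le s$ which does \emph{not} lie in $\overline{\eta(\cdots)}$, namely $Tp_\A(\eta)|_{T_\xi} = 0$. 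Because the relevant strata and their adjacencies are explicitly listed in Rieger~\cite{Rieger}, Mond~\cite{Mond,Mond82} and Marar--Tari~\cite{MararTari}/Hawes~\cite{Hawes}, this produces enough equations to pin down the finitely many coefficients.

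The steps, carried out uniformly for the three source--target pairs, would be: first, for each table list the orbit closure (adjacency) poset among types of codimension $\le s$; second, for each such type write down its quasi-homogeneous normal form and the induced torus weights on source, target, and on the normal space to the $\A$-orbit inside $J^r$; third, set up the linear system from the principal and vanishing equations above, using that $Tp_\A(\eta)$ is a polynomial of weighted degree exactly $s$ in the $c_i,c_j'$ (the Chern roots being the torus weights); fourth, solve the system --- its uniqueness is guaranteed by Theorem~\ref{thm:A-classification} together with the fact that the restriction map $H^*(B)\to \bigoplus_\xi H^*_{T_\xi}(\mathrm{pt})$ is injective in the relevant range --- and record the answer in Tables~\ref{table_tp2to2}, \ref{table_tp2to3}, \ref{table_tp3to3}; fifth, cross-check: for the stable types ($S_0$, the $A_k$'s, $I_{2,2}$, etc.) the output must reduce to the known polynomials in the quotient classes $\bar c_i$ listed above, and for the lowest types ($A_1$, fold, cusp) it must match the classical Thom--Boardman/Porteous formulas; additionally, specializing to complete intersections should reproduce the numerical sanity checks quoted in the introduction (e.g.\ the $B_1$-degree being $2d_1d_2$).

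The main obstacle I expect is twofold. First, for the \emph{unstable} types one must be careful that the restriction method actually determines the polynomial and is not under-determined: one needs enough non-adjacent strata of codimension $\le s$, and for a few types (notably the modal $P_3: (x,xy+y^3,xy^2+cy^4)$, which carries a modulus $c$) the symmetry group is smaller, the torus is lower-dimensional, and one may need auxiliary equations --- e.g.\ restricting to a larger ``unstable'' stratum, or using a sub-maximal torus together with Weyl-group invariance, or using the fact that the modulus does not affect the Thom polynomial because the $\A$-orbit closure is independent of $c$ generically. Second, the bookkeeping of torus weights on the normal slices inside $J^r(m,n)$ grows quickly with $r$ and with the codimension (especially for $(\C^3,0)\to(\C^3,0)$), so the principal equations become heavy; organizing these computations --- choosing good coordinates on the normal slice, exploiting the $\mathrm{Sym}$-decomposition $J^r = \bigoplus_j \mathrm{Sym}^j(E^*)\otimes F$ of the jet bundle --- is where the real work lies, though it is routine in principle. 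The verification against stable-type formulas and against the classical enumerative numbers is what gives confidence that the (long) tables are correct.
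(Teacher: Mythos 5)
Your strategy is the paper's own: Rim\'anyi's restriction method, i.e.\ write $Tp_\A(\eta)$ with unknown coefficients in degree $s$, derive a principal equation from the torus action attached to a quasi-homogeneous ($\A_e$-versal) normal form, add the vanishing (``homogeneous'') equations coming from all other $\A$-types of codimension $\le s$, and solve the resulting over-determined linear system, whose consistency is guaranteed by the existence statement of Theorem~\ref{thm:A-classification}. The paper carries this out explicitly for $B_1$ via the bundles $E_0\oplus E_0'\to E_1$ over $B\C^*$ and applies the identical scheme to every other entry of the tables.

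There are, however, two concrete points where your plan, as written, would break. First, your premise that a quasi-homogeneous normal form ``is possible in all low-codimension cases, cf.\ Remark~\ref{quasi_homog}'' misreads that remark: in Table~\ref{table_2to2} the germs Butterfly $(x,xy+y^5+y^7)$, Gulls $(x,xy^2+y^4+y^5)$ and Sharksfin $(x^2+y^3,x^3+y^2)$ are \emph{not} quasi-homogeneous, so there is no torus $T_\eta$ acting on these germs, and neither their principal equations nor the vanishing equations they must contribute to other computations can be set up as you propose. The paper's remedy --- the one additional idea your outline is missing --- is to work in $J^5(2,2)$ with Rieger's invariant stratification by equisingularity types up to codimension $4$, each stratum of which admits a quasi-homogeneous $5$-jet representative; the restriction method is then run for these jet strata, and this is precisely why the tables stop where they do (e.g.\ Elder Butterfly is out of reach). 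Second, your principal equation is misnormalized: once the weights are fixed, the right-hand side is the Euler class of the bundle whose zero section is the $\eta$-locus of the universal family, i.e.\ the product of the source weights and the unfolding-parameter weights (for $B_1$ this is $c_{\mathrm{top}}(E_0\oplus E_0')=2a^3$); there is no division by the target weights, and with your formula the linear systems would come out inconsistent or yield wrong coefficients. By contrast, your worry about $P_3$ is unnecessary: its normal form is quasi-homogeneous with weights independent of the modulus $c$, so the standard principal and vanishing equations apply verbatim, in line with the paper's observation that all normal forms in Tables~\ref{table_2to3} and~\ref{table_3to3} are quasi-homogeneous.
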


\begin{rem}{\rm 
Note that 
for some $\A$-types $\eta$,  
$Tp_{\A}(\eta)$ are written in quotient Chern classes 
(e.g., Swallowtail, Butterfly and Sharksfin in Table \ref{table_2to2});  
$Tp_\A(\eta)=Tp(\eta)$. 
It is the case that the $\A$-orbit $\eta$ is an open dense subset 
of its $\K$-orbit. 
If $\eta$ is a proper subset in its $\K$-orbit, 
then 
$Tp_{\A}(\eta)$ can not be written in quotient Chern classes. 
}
\end{rem}

\begin{table}
\centering
\begin{tabular}{|c|l|}\hline
Type  & Thom polynomial\\ \hline\hline
Fold (=$A_1$)  & $\bar{c}_1$ \\ \hline
Cusp (=$A_2$)  & $\bar{c}_1^2+\bar{c}_2$ \\ \hline
Swallowtail (=$A_3$)   &$\bar{c}_1^3+3\bar{c}_1\bar{c}_2+2\bar{c}_3$ \\ \hline
Lips/Beaks    & $-2c_1^3+5c_1^2c'_1-4c_1{c'_1}^2-c_1c_2+c_2c'_1+{c'_1}^3$\\ \hline
Goose  & \makecell[l]{$2c_1^4+5c_1^2c_2+4c_2^2-7c_1^3c'_1-10c_1c_2c'_1$\\$+9c_1^2{c'_1}^2+5c_2{c'_1}^2-5c_1{c'_1}^3+{c'_1}^4-2c_1^2c'_2$\\$-6c_2c'_2+4c_1c'_1c'_2-2{c'_1}^2c'_2+2{c'_2}^2$}\\ \hline
Gulls  & \makecell[l]{$6c_1^4-c_1^2c_2-4c_2^2-17c_1^3c'_1+4c_1c_2c'_1$\\$+17c_1^2{c'_1}^2-3c_2{c'_1}^2-7c_1{c'_1}^3+{c'_1}^4+2c_1^2c'_2$\\$+6c_2c'_2-4c_1c'_1c'_2+2{c'_1}^2c'_2-2{c'_2}^2$}\\ \hline
Butterfly (=$A_4$) & $\bar{c}_1^4+6\bar{c}_1^2\bar{c}_2+2\bar{c}_2^2+9\bar{c}_1\bar{c}_3+6\bar{c}_4$\\ \hline
Sharksfin (=$I_{2,2}$)& $\bar{c}_2^2-\bar{c}_1\bar{c}_3$ \\ \hline
\end{tabular} \vspace{8pt}
\caption{Thom polynomials for $\C^2\to\C^2$.}\label{table_tp2to2}
\end{table}
	
\begin{table}
\centering
\begin{tabular}{|c|l|}\hline
Type   & Thom polynomial\\ \hline\hline
$S_0$
& $\bar{c}_2$ \\ \hline
$B_1(=S_1)$    & 
$-3 c_1^3 + 4 c_1 c_2 + 4 c_1^2 c'_1 - 2 c_2 c'_1 
- c_1 {c'_1}^2 - 3 c_1 c'_2 + c'_1 c'_2 + c'_3$ \\ 
\hline
$S_2$ & \makecell[l]{
$13 c_1^4 - 22 c_1^2 c_2 + 3 c_2^2 - 21 c_1^3 c'_1 + 19 c_1 c_2 c'_1 $\\
$+  9 c_1^2 {c'_1}^2 - 3 c_2 {c'_1}^2 - {c_1} {c'_1}^3 + 14 c_1^2 c'_2 - 4 c_2 c'_2 $
\\$- 9 c_1 c'_1 c'_2 +{c'_1}^2 c'_2 + {c'_2}^2 - 6 c_1 c'_3 + 2 c'_1 c'_3$} \\ 
\hline
$B_2$  & \makecell[l]{
$11 c_1^4 - 22 c_1^2 c_2 + c_2^2 - 17 c_1^3 c'_1 + 21 c_1 c_2 c'_1$\\
$ + 7 c_1^2 {c'_1}^2 - 5 c_2 {c'_1}^2- c_1 {c'_1}^3 + 10 c_1^2 c'_2$\\
$ - 5 c_1 c'_1 c'_2 + {c'_1}^2 c'_2 - {c'_2}^2 - 10 c_1 c'_3 + 4 c'_1 c'_3$} \\ 
\hline
$H_2$
& $\bar{c}_2^2+\bar{c}_1\bar{c}_3+2\bar{c}_4$\\ \hline
$S_3$ & \makecell[l]{
$-71 c_1^5 + 149 c_1^3 c_2 -  48 c_1 c_2^2 + 132 c_1^4 c'_1 - 174 c_1^2 c_2 c'_1$\\
$ + 20 c_2^2 c'_1 - 76 c_1^3 {c'_1}^2 + 53 c_1 c_2 {c'_1}^2 + 16 c_1^2 {c'_1}^3$\\
$ - 4 c_2 {c'_1}^3 -  c_1 {c'_1}^4 - 82 c_1^3 c'_2 + 53 c_1 c_2 c'_2 + 75 c_1^2 c'_1 c'_2 $\\
$- 17 c_2 c'_1 c'_2 - 18 c_1 {c'_1}^2 c'_2 + {c'_1}^3 c'_2 - 11 c_1 {c'_2}^2 + 3 c'_1 {c'_2}^2$\\
$ + 39 c_1^2 c'_3 -  9 c_2 c'_3 - 24 c_1 c'_1 c'_3 + 3 {c'_1}^2 c'_3 + 3 c'_2 c'_3$}\\ \hline
$B_3$  & \makecell[l]{
$-110 c_1^5 + 286 c_1^3 c_2 - 76 c_1 c_2^2 + 192 c_1^4 c'_1 - 356 c_1^2 c_2 c'_1$\\
$ + 32 c_2^2 c'_1 - 104 c_1^3 {c'_1}^2 + 134 c_1 c_2 {c'_1}^2 + 24 c_1^2 {c'_1}^3 - 16 c_2 {c'_1}^3$\\
$ - 2 c_1 {c'_1}^4 - 100 c_1^3 c'_2 + 54 c_1 c_2 c'_2 + 70 c_1^2 c'_1 c'_2 - 18 c_2 c'_1 c'_2$\\
$ - 20 c_1 {c'_1}^2 c'_2 + 2 {c'_1}^3 c'_2 + 10 c_1 {c'_2}^2 - 2 c'_1 {c'_2}^2 + 106 c_1^2 c'_3 $\\
$- 6 c_2 c'_3 - 72 c_1 c'_1 c'_3 + 14 {c'_1}^2 c'_3 - 6 c'_2 c'_3$}\\ 
\hline
$H_3$  & \makecell[l]{
$-48 c_1^5 + 156 c_1^3 c_2 - 90 c_1 c_2^2 + 80 c_1^4 c'_1 - 182 c_1^2 c_2 c'_1$\\
$ + 42 c_2^2 c'_1 -  36 c_1^3 {c'_1}^2 + 48 c_1 c_2 {c'_1}^2 + 4 c_1^2 {c'_1}^3$\\
$ - 2 c_2 {c'_1}^3 -  60 c_1^3 c'_2 + 84 c_1 c_2 c'_2 + 46 c_1^2 c'_1 c'_2 - 26 c_2 c'_1 c'_2$\\
$ -  6 c_1 {c'_1}^2 c'_2 - 12 c_1 {c'_2}^2 + 2 c'_1 {c'_2}^2 + 45 c_1^2 c'_3 - 27 c_2 c'_3$\\
$ -  27 c_1 c'_1 c'_3 + 2 {c'_1}^2 c'_3 + 9 c'_2 c'_3$}\\ \hline
$C_3$  & \makecell[l]{
$-33 c_1^5 + 66 c_1^3 c_2 - 3 c_1 c_2^2 +62 c_1^4 c'_1 - 85 c_1^2 c_2 c'_1$\\
$  + c_2^2 c'_1 - 38 c_1^3 {c'_1}^2 +36 c_1 c_2 {c'_1}^2 + 10 c_1^2 {c'_1}^3 - 5 c_2 {c'_1}^3 $\\
$    - c_1 {c'_1}^4 - 30 c_1^3 c'_2  +25 c_1^2 c'_1 c'_2 - 8 c_1 {c'_1}^2 c'_2 $\\
$    + {c'_1}^3 c'_2 + 3 c_1 {c'_2}^2 - c'_1 {c'_2}^2  +30 c_1^2 c'_3 - 22 c_1 c'_1 c'_3 + 4 {c'_1}^2 c'_3$}\\ 
\hline
$P_3$  & \makecell[l]{
$-16 c_1^5 + 48 c_1^3 c_2 -  24 c_1 c_2^2 + 28 c_1^4 c'_1 - 58 c_1^2 c_2 c'_1$\\
$ + 11 c_2^2 c'_1  -14 c_1^3 {c'_1}^2 + 17 c_1 c_2 {c'_1}^2$\\
$ + 2 c_1^2 {c'_1}^3 - c_2 {c'_1}^3 - 20 c_1^3 c'_2 + 24 c_1 c_2 c'_2 + 17 c_1^2 c'_1 c'_2$\\
$ - 8 c_2 c'_1 c'_2-  3 c_1 {c'_1}^2 c'_2 - 4 c_1 {c'_2}^2 + c'_1 {c'_2}^2 + 14 c_1^2 c'_3$\\
$ - 6 c_2 c'_3 -  9 c_1 c'_1 c'_3 + {c'_1}^2 c'_3 + 2 c'_2 c'_3$}\\ 
\hline
\end{tabular} \vspace{8pt}
\caption{Thom polynomials for $\C^2\to\C^3$.}\label{table_tp2to3}
\end{table}

\begin{table}
\centering
\begin{tabular}{|c|l|}\hline
  Type & Thom polynomial\\ \hline\hline
$C$  & \makecell[l]{
$2 c_1^4 + c_1^2 c_2 - 2 c_2^2 + 3 c_1 c_3 - 7 c_1^3 c'_1 - 3 c_3 c'_1$\\
$ +  9 c_1^2 {c'_1}^2 - c_2 {c'_1}^2 - 5 c_1 {c'_1}^3 + {c'_1}^4 - 2c_1^2 c'_2$\\
$ + 4 c_2 c'_2 +  2 c_1 c'_1 c'_2 - 2 {c'_2}^2 - 2 c_1 c'_3 + 2 c'_1 c'_3$}\\ \hline
$D$   & \makecell[l]{
$18 c_1^4 - 21 c_1^2 c_2 - 2 c_2^2 + 8 c_1 c_3 - 45 c_1^3 c'_1 + 31 c_1 c_2 c'_1$\\
$ - 6 c_3 c'_1 + 40 c_1^2 {c'_1}^2 - 12 c_2 {c'_1}^2 - 15 c_1{ c'_1}^3 + 2 {c'_1}^4 +13 c_1^2 c'_2$\\
$ + 4 c_2 c'_2 - 17 c_1 c'_1 c'_2 + 6 {c'_1}^2 c'_2 - 2 {c'_2}^2 - 8 c_1 c'_3 +  6 c'_1 c'_3$}\\ \hline
\end{tabular} \vspace{8pt}
\caption{Thom polynomials for $\C^3\to\C^3$. 
 Other types, i.e. $A_\mu$ and $I_{2,2}$, have the same Thom polynomials of 
 stable singularities types as appeared in Table \ref{table_tp2to2}.}
\label{table_tp3to3}
\end{table}

To compute $Tp_{\A}(\eta)$, 
we make use of an effective method due to R. Rim\'anyi, called 
the \textit{restriction method} \cite{Rimanyi, FR02, Kaz03}. 
Below we demonstrate the computation of $Tp_{\A}(B_1)$ 
for the $\A$-singularity 
$$B_1(=S_1)\; (x, y^2, y^3+x^2y): (\C^2,0)\to(\C^3,0).$$
 It has $\A$-codimension $3$, thus we may write 
\begin{align}
Tp_{\A}(B_1)=&\;\;  x_1c_1^3+x_2c_1c_2+x_3c_1^2c'_1+x_4c_1{c'_1}^2+x_5c_2{c'_1}\nonumber\\
& +x_6{c'_1}^3+x_7c_1c'_2+x_8c'_1c'_2+x_9c'_3 \nonumber
\end{align}
with unknown coefficients $x_1,\ldots,x_9$. 
So our task is to determine these coefficients. 
The key point is a simple fact that weighted homogeneous map-germs admit 
a canonical action of the torus $T=\C^*=\C-\{0\}$. 
Take an $\A_e$-versal unfolding of type $B_1$ 
$$
B_1:\C^2\times \C\rightarrow\C^3,\ (x,y,a)\mapsto (x,y^2,y^3+x^2y+ay),
$$
then the $T$-actions on the source and the target are diagonal: 
$$
\rho_0=\alpha\oplus\alpha\oplus\alpha^{\otimes 2},\ \rho_1= \alpha\oplus\alpha^{\otimes 2}\oplus\alpha^{\otimes 3}\quad (\alpha\in T). 
$$

Take the dual tautological line bundle $\gamma=\Ost_{\Proj^N}(1)$ over 
a projective space of high dimension $N \gg 0$  or say, $\Proj^\infty=B\C^*$, 
the classifying space for $\C^*$. 
We set the following vector bundles:
$$
E_0:=\gamma\oplus\gamma, \;\; E_0':=\gamma^{\otimes 2},
\;\; E_1:=\gamma\oplus \gamma\oplus^{\otimes 2} \gamma^{\otimes 3},
$$
and then by using the normal form of $B_1$ and $\C^*$-action 
we can define a holomorphic family of map-germs so that 
the restriction of each fiber is $\A$-equivalent to 
the versal unfolding of type $B_1$:   
$$
\begin{tikzcd}
E_0\oplus E'_0 \arrow{rr}{f_{B_1}}\arrow{rd} & & E_1\arrow{ld}\\
& \Proj^\infty &
\end{tikzcd}
$$

Put $a=c_1(\gamma)$ and then $H^*(\Proj^\infty)=\Z[a]$, 
which is canonically isomorphic to the cohomology of the total spaces of these vector bundles 
via the pullback of the projection to the base $\Proj^\infty$. 
Chern classes of those bundles are written by
$$
c(E_0)=(1+a)^2,\quad c(E'_0)=1+2a,
$$

$$c(E_1)=(1+a)(1+2a)(1+3a),
$$
therefore $c_1(E_0)=2a$, $c_2(E_0)=a^2$, $c'_1(E_1)=6a$, $c'_2(E_1)=11a^2$, $c'_3(E_1)=6a^3$, 
and the top Chern class of $E_0\oplus E'_0$ is equal to $2a^3$. 

We substitute them for Chern classes in the above form of $Tp_\A(B_1)$. 
Then it equals the top Chern class of $E_0\oplus E'_0$ because 
the $B_1$-singularity locus of the universal map $f_{B_1}$ 
is just the zero section of $E_0\oplus E'_0$. 
Hence, we get the following equation 
(called the {\it principal equation} in \cite{Rimanyi, FR}): 
$$
8 x_1 + 2 x_2 + 24 x_3 + 72 x_4 + 6 x_5 + 216 x_6 + 22 x_7 + 66 x_8 + 
 6 x_9 = 2.
$$
Similarly, to each of other singularity types 
with $\A$-codimension less than or equal to that of $B_1$, 
i.e., $A_0$ (immersion),  $S_0$, 
we can associate a holomorphic family, the universal map of that type. 
Apply $Tp_{\A}(B_1)$ to the holomorphic family, 
then the result must be zero, 
because the $B_1$-singularity does not appear. 
Consequently, we obtain ten more equations as follows 
(called the {\it homogeneous equations}): 
\begin{align}
&g_{01} : x_6=0 \nonumber\\
&g_{02} : x_1 + x_3 + x_4 + x_6=0 \nonumber\\
&g_{03} : x_4 + 3 x_6 + x_8=0 \nonumber\\
&g_{04} : x_3 + 2 x_4 + 3 x_6 + x_7 + x_8=0\nonumber \\
&g_{05} : 3 x_1 + x_2 + 3 x_3 + 3 x_4 + x_5 + 3 x_6 + x_7 + x_8=0 \nonumber\\
&g_{06} : 2 x_3 + 4 x_4 + x_5 + 6 x_6 + 2 x_7 + 3 x_8 + x_9=0 \nonumber\\
&g_{07} : x_1 + 2 x_3 + 4 x_4 + 8 x_6 + x_7 + 2 x_8=0\nonumber \\
&g_{08} : x_1 + 3 x_3 + 9 x_4 + 27 x_6 + 2 x_7 + 6 x_8=0 \nonumber\\
&g_{09} : 3 x_1 + x_2 + 7 x_3 + 16 x_4 + 2 x_5 + 36 x_6 + 6 x_7 + 13 x_8 + 2 x_9=0\nonumber \\
&g_{10} : 3 x_1 + x_2 + 8 x_3 + 21 x_4 + 3 x_5 + 54 x_6 + 7 x_7 + 19 x_8 + 2 x_9=0\nonumber
\end{align}
The first six equations come from $A_0: (x,y) \mapsto (x,y,0)$: 
the symmetry group is $T^3$ acting on the source and the target in an obvious way, 
that induces $f_{A_0}: E_0 \to E_1$ ($E_0'=\emptyset$). 
The last four come from $S_0$. 

Solving the above system of eleven linear equations of unknowns $x_1, \cdots, x_9$ 
(in fact, the equation is over-determined), we obtain 
$$
Tp(B_1)=-3 c_1^3 + 4 c_1 c_2 + 4 c_1^2 c'_1 - c_1 {c'_1}^2  - 2 c_2 c'_1 - 3 c_1 c'_2 + c'_1 c'_2 + c'_3.
$$

In entirely the same way, 
we can compute Thom polynomials for other $\A$-singularities types.

\begin{rem}\label{quasi_homog}{\rm 
Necessary inputs for computing Thom polynomials via the restriction method 
are the weights and degrees of quasi-homogeneous normal forms 
of germs (or jets)  in the classification. 
In Tables \ref{table_2to3} and \ref{table_3to3}, 
all the normal forms are quasi-homogeneous. 
However, 
in Table \ref{table_2to2}, 
Butterfly, Gulls  and Sharksfin are not so. 
Then, we instead work over $5$-jets;
there is an invariant stratification of $J^5(2,2)$ up to codimension $4$ 
so that those strata correspond to 
 \textit{equisingularity types} (or topological $\A$-classification) 
 of plane-to-plane germs  \cite{Rieger}. 
Every stratum admits a representative of quasi-homogeneous $5$-jet, 
thus the restriction method works and one gets corresponding Thom polynomials 
as in Table \ref{table_tp2to2}. 
A trouble comes to $\A$-types of codimension $5$; 
Butterfly $(x, xy+y^5+y^7)$ and Elder butterfly $(x, xy+y^5)$ 
are different $\A$-types but equisingular; they are $7$-$\A$-determined 
of  $\A$-codimension  $4$ and $5$, respectively. 
It is very hard to compute the $Tp_\A$ of Elder butterfly, 
because Butterfly in $7$-jets is not quasi-homogeneous 
(so the restriction method does not work). 
To compute it, we need some different technique, e.g. 
a resolution of the $\A$-orbit of Elder butterfly in $J^7(2,2)$. 
In fact, 
Tables \ref{table_tp2to2}, \ref{table_tp2to3} and \ref{table_tp3to3} 
are the best results using the restriction method, 
since 
higher codimensional $\A$-types beyond  tables have 
normal forms with moduli parameters or quasi-homogeneous normal forms. 
 }
 \end{rem}

\section{Counting singular projections}

\subsection{Singularities of projections}
We are concerned with counting lines $l$ tangent to a smooth projective variety 
$X^m \subset \Proj^{n+1}$ at 
some point $x \in X$ with prescribed contact types. 
First,  take a point $q \, (\not=x) \in l$ and consider 
the linear projection  
from $q$ mapped to a hyperplane transverse to $l$ at $x$; 
that defines a holomorphic family 
over the space $\F$ of triples $(x, l, q)$ 
with $x \in X$ and $x, q (\not=x) \in l$: 
$$\varphi_{(x,l,q)}: 
(\C^m,0) = (X, x) \hookrightarrow (\Proj^{n+1}-\{q\}, x) \to (\Proj^n, x) = (\C^n,0).$$
The $\A$-type of $\varphi_{(x,l,q)}$ 
does not depend on the choice of general $q$ lying on $l$, 
while it can be more degenerate when taking $q$ at some special position.  
So we call  the map-germ from general $q$ 
 the {\it projection of $X$ along $l$ at $x$}, for short. 
Below, we summarize some classification results 
(Theorem \ref{proj23}, Theorem \ref{proj24}), 
that tell us of which $\A$-types should be considered in our problem. 
For such an $\A$-type $\eta$ of $(\C^m,0) \to (\C^n,0)$, 
we set the {\it $\eta$-singularity locus of $X$} to be the locus of points $x \in X$ 
having a tangent line along which  the projection is of type $\eta$. 
What we want is to compute the degree of the $\eta$-singularity locus. 
Here we use our universal polynomial $Tp_\A(\eta)$. 
Note that the Thom polynomial can not directly be applied to $\varphi_{(x,l,q)}$, 
because the base space $\F$ is not compact.  
A key idea is to introduce a certain family of $\varphi$ 
over the flag manifold of $(x, l)$ by choosing $q=q_{(x,l)}$ 
at the `infinity on $l$ with respect to $x$' 
so that generically $q$ does not meet any special positions,  
see (\ref{q}) and (\ref{varphi}) in \S 3.2.

\

\t
$\bullet \;\;$ {\bf Surfaces in $3$-space.} 
The contact of any lines with a {\it generic} smooth surface in $\Proj^3$ 
has been studied in \cite{Arnold, Platonova} (cf.   \cite{Landis, Kabata}), 
and for a singular surface with crosscaps,  in \cite{YKO, West}. 
The flag manifold of $(x, l)$ in this case has dimension $4$. 
Thus, by a standard argument of transversality, 
the projection along lines of a generic surface 
must have singularities of $\A$-codimension at most $4$. 
However, it turns out that not all $\A$-types may appear,  
for some constraint is caused by the geometric setting. 
Indeed,  it is shown that 

\begin{thm}{\rm \cite{Platonova, Kabata, YKO}} \label{proj23} 
For a generic surface in $\Proj^3$ with ordinary singularities, it holds that  
for any point $x$ of the surface and 
for any line $l$ passing through $x$, 
the projection of the surface along $l$ at $x$ is $\A$-equivalent to 
one of all but the Goose singularity in Table  \ref{table_2to2}. 
\end{thm}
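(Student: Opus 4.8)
\medskip
We sketch the strategy of proof. Let $\Phi$ be the flag manifold of pairs $(x,l)$ with $x\in X$ and $l$ a line through $x$; since the lines of $\Proj^3$ through a fixed point form a $\Proj^2$, $\dim_\C\Phi=2+2=4$. Choosing the centre $q$ on $l$ in general position relative to $x$ (the choice described in \S 3.2), the germs $\varphi_{(x,l)}$ fit together into a continuous family $\varphi=\varphi_X$ of holomorphic map-germs $(\C^2,0)\to(\C^2,0)$ over $\Phi$; away from the singular locus of $X$ the source germ $(X,x)$ is smooth, along the double curve and at triple points it is a transverse union of smooth sheets, and at the finitely many pinch points it is a crosscap, in which case $\varphi_{(x,l)}$ is the composite of the crosscap parametrization with a linear projection. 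First I would fix a Whitney stratification of $J^r(2,2)$ ($r$ large; $r=5$ suffices for the strata of codimension $\le4$) refining the $\A$-equisingularity stratification so that each stratum of codimension $\le4$ has a dense $\A$-orbit, namely the corresponding entry of Table \ref{table_2to2}, with complement of codimension $\ge5$; topological rather than holomorphic strata must be used here because Butterfly, Gulls and Sharksfin are not quasi-homogeneous, cf.\ Remark \ref{quasi_homog}.

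The first substantive step is a transversality statement of Thom type for the family of projection families: for $X$ in an open dense subset of a suitable ambient family of surfaces with ordinary singularities, the $r$-jet extension $j^r\varphi_X\colon\Phi\to J^r(2,2)$ is transverse to every stratum. Its content is that deforming $X$ together with its embedding moves $j^r\varphi_X$ transversally to the stratification; for projections of surfaces this goes back to Platonova (see \cite{Platonova}; cf.\ \cite{Kabata, YKO}). Granting it, a dimension count does most of the work: since $\dim_\C\Phi=4$, $j^r\varphi_X$ meets only strata of codimension $\le4$, and each of these is the dense $\A$-orbit of an entry of Table \ref{table_2to2}; hence $\varphi_{(x,l)}$ is $\A$-equivalent to such an entry (at a point of the double curve or a triple point one applies this to each sheet). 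One then also locates the types: at a smooth point, writing $X=\{z=f(u,w)\}$, $f\in\fm^2$, in affine coordinates with $x=0$ and $T_xX=\{z=0\}$, the projection along a line with tangent direction $\ker\ell$ is $\A$-equivalent to the fibred germ $(u,w)\mapsto(\ell(u,w),f(u,w))$, and classifying these one finds Regular for $l$ transverse, Fold along a generic tangent line, Cusp along an asymptotic line at a generic hyperbolic point, Swallowtail along the asymptotic line at a point of the flecnodal curve, Lips/Beaks along the asymptotic line at a generic parabolic point, and Butterfly and Gulls at isolated points; at a pinch point, composing the crosscap with a linear projection additionally produces the corank-$2$ type Sharksfin for suitable $l$, which is exactly why the hypothesis that $X$ has ordinary singularities is needed to realise Sharksfin, every projection germ of a smooth surface having corank $\le1$.

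The remaining, and delicate, point is that \emph{Goose does not occur}. Among the $\A$-codimension-$4$ types, Goose $(x,y^3+x^3y)$ is distinguished from Lips/Beaks $(x,y^3+x^2y)$ by its critical set being a cuspidal rather than a nodal plane curve-germ. At a parabolic point, choosing the affine coordinates above so that the $u$-direction is asymptotic (so $\ell=w$), the projection along the asymptotic line is the fibred germ $(u,w)\mapsto(w,f(u,w))$; completing the cube in $u$ and absorbing the part of $f$ depending only on $w$ into the target brings it to the form $(w,u^3+c(w)u)$, and it is Lips/Beaks, Goose, or worse according as $c(w)$ vanishes to order exactly $2$, exactly $3$, or $\ge4$. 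The plan is to compute $c(w)$ from the $3$- and $4$-jet of $f$ at the parabolic point, using the residual coordinate freedom, and to show that the locus of $r$-jets of projection germs of Goose type has codimension $>4$ in $J^r(2,2)$ — equivalently, that along the parabolic curve of a generic surface the condition $\operatorname{ord}_wc(w)=3$ cannot be met. Together with the transversality and the dimension count this excludes Goose. I expect this last step to be the main obstacle: it is precisely here that the special structure of \emph{projection} families — as opposed to arbitrary $4$-parameter families of plane-to-plane germs — is used, and it is the heart of Platonova's original analysis, re-derived through the $\A$-classification in \cite{Kabata, YKO}; the pinch-point contribution is treated similarly, following \cite{West, YKO}.
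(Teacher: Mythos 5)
Your overall skeleton---a stratification of $J^r(2,2)$ refining the equisingularity classes, a Thom--Bertini type transversality statement for the jet extension of the family of projections, a dimension count over the $4$-dimensional flag space of $(x,l)$, and a sheet-by-sheet analysis at the ordinary singularities---is the route taken in \cite{Platonova, Kabata, YKO}, which the paper only summarizes (Remark \ref{Platonova}); that part of your plan is consistent with it.

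The gap is in the step you yourself flag as the heart of the matter: the exclusion of Goose, and your proposed mechanism for it is false. You reduce the projection along $l$ to the fibred germ $(u,w)\mapsto(w,f(u,w))$ and plan to show that $\operatorname{ord}_w c(w)=3$ never holds along the parabolic curve of a generic $X$. But that fibred germ is the projection from one \emph{specific} viewpoint (the point at infinity of $l$ in your chart), not from a general point of $l$; and for any fixed (even generic) section $(x,l)\mapsto q_{(x,l)}\in l$ of viewpoints the Goose type \emph{does} occur, at the finitely many parabolic points where the chosen viewpoint coincides with the distinguished point $\tilde q_{(x,l)}$ on the asymptotic line. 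The paper's Remark \ref{goose} computes the number of such points as the degree of $Tp_\A(\textrm{Goose})$, e.g. $2d(d-2)(3d-8)>0$ for a smooth surface of degree $d\ge 3$, so your claimed emptiness cannot be proved. (Relatedly, "codimension $>4$ in $J^r(2,2)$" cannot be right: the Goose orbit has $\A$-codimension exactly $4$.) The correct reason Goose is absent from Theorem \ref{proj23} is the definition of "projection along $l$": it is the germ of the central projection from a \emph{general} $q\in l$, and, as explained in Remark \ref{Platonova}, among the local equations of the Goose locus in the $5$-dimensional space $\F$ of triples $(x,l,q)$ there is one equation in $q$ (Goose is seen only from one special viewpoint on the asymptotic line, Lips/Beaks from all others), while for every other type in Table \ref{table_2to2} no equation involves $q$. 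So in your normal form $c(w)$ must be computed as a function of the viewpoint $q$ on $l$, and what has to be shown is that for fixed $(x,l)$ the condition $\operatorname{ord}_w c=3$ determines at most one special $q$ on $l$---not that it is never satisfied for generic $X$. With that correction (and transversality for the family over triples $(x,l,q)$, as in \cite{Platonova, Kabata}), the dimension count goes through as you intend.
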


\begin{rem}\label{Platonova}
{\rm 
The main point of the proof is briefly explained as follows. 
In  \cite{Platonova} (also \cite{Kabata, SKSO, YKO}), 
one considers the holomorphic family $\varphi_{(x,l,q)}$ 
of central projections 
over the space $\F$;  
the transversality condition of its jet extension with $\A$-orbits is imposed 
for classifying singularities of the projection, 
that is, this condition provides  
the required genericity of embeddings of the surface 
by a version of transversality (or Bertini-type) theorem. 
A major task is to find the local defining equation of 
the $\eta$-singularity locus for $\varphi_{(x,l,q)}$
in $\F$ 
-- in particular, for the Goose type, 
it contains an equation of $q$ (with coefficients depending on $(x,l)$), 
which defines the special position of viewpoints on $l$, 
while there appears  
no equations of $q$ for other types $\eta$ in Table  \ref{table_2to2}. 
That is why this type is excluded in Theorem \ref{proj23}.   
}
\end{rem}

Let us see some geometric characterizations. Let $X$ be a surface in $\Proj^3$ and $x \in X$. 
In general, there are exactly two asymptotic lines, 
i.e. tangent lines at $x$ having at least $3$-point contact with $X$; 
the projection along a non-asymptotic line at $x$ is of type Fold, 
and the projection along an asymptotic line is generically of Cusp. 
A {\it flecnodal point} is the point so that  
one of asymptotic line has at least $4$-point contact 
(then $X \cap T_xX$ has a node at $x$ with an inflection); 
in other words, the projection along the line is of  Swallowtail or worse. 
A {\it parabolic point} is the point which has a unique asymptotic line (doubly counted); 
the projection along the asymptotic line is of Lips/Beaks or worse. 
At every parabolic point 
the Goose singularity is generically observed in the projection 
from a special viewpoint $q$ on the asymptotic line,  
just as mentioned in Remark \ref{Platonova}. 
The {\it parabolic curve} (resp. {\it flecnodal curve}) is defined to be 
the closure of the locus of parabolic points (resp.  flecnodal points). 
The parabolic curve becomes to be smooth,  
and the flecnodal curve is smooth but only finitely many nodes; 
furthermore, these two curves meet each other tangentially at {\it cusps of Gauss} -- 
those points are actually the locus of Gulls singularity, 
i.e. the projection along the asymptotic line is of type Gulls. 
The flecnodal curve may be tangent to asymptotic lines at some isolated points on the curve, 
that corresponds to the locus of Butterfly. 

In case that $X$ admits ordinary singularities, 
the resolution of singularities gives a stable map $f: M \to \Proj^3$ (with $X=f(M)$ and $M$ smooth). 
Notice that the corank $2$ singularity $I_{2,2}$ appear 
in the projection at a crosscap $x \in X$; more precisely, 
it appears in the composed map of $f$ and the projection along 
$l=df_p(T_pM)$ ($x=f(p)$) (cf. \cite{West}). 
Projective differential geometry of parabolic and flenodal curves around a crosscap 
has been explored rather recently in \cite{YKO}; 
it is shown that the preimage in $M$ of each curve has a node at the crosscap point, 
and each smooth branch of the flecnodal curve has $4$-point contact 
with a smooth branch of the parabolic curve. 

\

\t
$\bullet \;\;$ {\bf Surfaces in $4$-space.} 
The contact of any lines with  
a {\it generic} smooth surface in $\Proj^4$ 
has firstly been considered in \cite{Mond82} 
(see also \cite[Appendix]{DK}). 
The projection along lines must have $\A$-codimension $\le 5$, 
but some $\A$-types do not appear. 

\begin{thm}{\rm \cite{Mond82, DK}} \label{proj24} 
For a generic surface in $\Proj^4$, it holds that  
for any point $x$ of the surface and for any line $l$ passing through $x$, 
the projection of the surface along $l$ at $x$ is $\A$-equivalent to 
one of types in Table \ref{table_2to3} 
except for $S_2, S_3, B_3$ and $C_3$. 
\end{thm}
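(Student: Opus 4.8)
The plan is to follow the same strategy that proves Theorem~\ref{proj23}: realize the projection as a concrete holomorphic family over the flag manifold and apply a parametric transversality (Bertini-type) theorem, then rule out certain $\A$-types by a dimension count refined by the geometric constraints of the central-projection setup. First I would set up the incidence variety $\F$ of triples $(x,l,q)$ with $x\in X$, $x,q\,(\not=x)\in l$, together with the family $\varphi_{(x,l,q)}:(X,x)\to(\Proj^3_{(x,l)},x)$ of Remark~\ref{Platonova}; its base has dimension $\dim X + (\dim l - 1) + 1 = 2 + 1 + 1 = 4$ over the flag of $(x,l)$, with one extra parameter $q$ running along $l$. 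Imposing that the jet extension $j^r\varphi$ be transverse to all $\A$-orbits in $J^r(2,3)$ for generic embeddings $X\hookrightarrow\Proj^4$ is legitimate by the transversality theorem used in \cite{Mond82, Platonova}, since the family of central projections is ``ample enough'' (moving the surface moves the jet of $\varphi$ surjectively onto $J^r(2,3)$ modulo the group). Transversality then immediately forbids any $\A$-orbit of codimension $>5$ from occurring, which already cuts the list in Table~\ref{table_2to3}.

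Next I would show that, among the $\A$-types of codimension $\le 5$, the four types $S_2,S_3,B_3,C_3$ still cannot appear, and this is where the real content lies — it is \emph{not} a pure dimension count, because all four have codimension $\le 5$ and would naively be expected on a $4$-dimensional base (after the extra $q$-parameter is used up). The mechanism, exactly as in Remark~\ref{Platonova} for the Goose type, is that the local defining equations of the $\eta$-singularity locus of $\varphi_{(x,l,q)}$ inside $\F$ contain a nontrivial equation in $q$ (with coefficients depending on $(x,l)$): for these types the jet of the projection along $l$ acquires a degeneracy that forces $q$ into a special position on $l$, so the locus in the flag of $(x,l)$ obtained by projecting out $q$ has codimension strictly larger than the naive $\A$-codimension, hence is empty for $\dim$-reasons on a generic surface. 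Concretely, I would write down the $2$-jet and $3$-jet of the projection in adapted coordinates (the surface given as a graph over $T_xX$, the line $l$ an asymptotic direction, $q=[0:\cdots:0:1]$ at infinity along $l$), identify the $\A$-singularity conditions for $S_2$ ($S_k$ with $k=2$), $S_3$, $B_3$ and $C_3$ in terms of the Taylor coefficients of the surface and the position of $q$, and exhibit in each case the extra equation in $q$ that does not occur for the types that \emph{do} appear ($S_0,B_1,B_2,H_2,H_3,P_3$, and immersion).

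The hard part will be precisely this extraction of the $q$-equations: one must normal-form the jet of $\varphi_{(x,l,q)}$ far enough to separate, say, $B_3$ from $B_2$ and $S_3$ from $S_2$ or $H_3$, and check that for $S_2,S_3,B_3,C_3$ the passage from $\A$-codimension to the effective codimension in the $(x,l)$-flag picks up at least one extra unit coming from a constraint on $q$ (equivalently, that the stratum in $J^r(2,3)$ these types cut out is not in general position with respect to the image of the central-projection family, but only with a positive-dimensional fiber over each $(x,l)$). This is essentially the computation carried out in \cite{Mond82}; I would organize it by first treating the corank-$1$ mono-germs $S_k$ (where the obstruction is a single inflection-type condition that a general viewpoint avoids), then $B_k$ (where $B_3$ requires an extra vanishing beyond the asymptotic/parabolic condition controlling $B_1,B_2$), then $C_3$ (a corank-$2$ refinement that again forces a special $q$), and finally record that the remaining types — immersion, $S_0$, $B_1,B_2,H_2,H_3,P_3$ — each do occur, with the expected codimensions $0,2,3,4,4,5,5$, for an appropriately generic $X$, which is the content one needs to invoke Theorem~\ref{thm:A-classification} and Theorem~\ref{dCT2} afterwards. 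Once the $q$-equations are in hand, the conclusion is immediate: on a generic surface the $(x,l)$-flag is $4$-dimensional, so any $\A$-type whose effective locus has codimension $>4$ there — which now includes $S_2,S_3,B_3,C_3$ — is empty, proving the theorem.
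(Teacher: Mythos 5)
Your overall plan --- set up the family $\varphi_{(x,l,q)}$ over triples, get transversality for a generic embedding, and then exhibit, for $S_2,S_3,B_3,C_3$, a nontrivial equation in the viewpoint $q$ in the local defining equations of the singularity locus --- is exactly the mechanism the paper points to (it does not reprove the statement; it cites Mond's thesis and the appendix of Deolindo Silva--Kabata, and explains the mechanism only at the level of Remark \ref{Platonova}: those four types ``are observed from some special viewpoints on an asymptotic line''). So the substance of your proposal, namely the jet computations isolating the $q$-equations, is the right content and is precisely what the cited references carry out.

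However, your concluding step is wrong on two counts. First, the dimension bookkeeping: for a surface in $\Proj^4$ the lines through a point form a $\Proj^3$, so the $(x,l)$-flag is $5$-dimensional (and the $(x,l,q)$-space is $6$-dimensional), not $4$-dimensional as you assert; with your ``codimension $>4$ is empty'' criterion you would also exclude $H_3$ and $P_3$, which have $\A$-codimension $5$, occur at discrete points, and are explicitly retained in the statement (their loci have nonzero degree by Theorem \ref{dCT2}). Second, the logic of ``projecting out $q$ picks up an extra unit of codimension, hence the locus is empty'' is backwards: if the $\eta$-locus in $(x,l,q)$-space has codimension equal to the $\A$-codimension $c$ and the $q$-equation makes its fibers over $(x,l)$ finite, then its image in the flag has codimension $c-1$, i.e.\ it is typically \emph{nonempty} (for Goose in $\Proj^3$ this image is a curve sitting over the parabolic curve). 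The correct conclusion is not an emptiness-by-dimension argument at all: by the paper's definition in \S 3.1, ``the projection of $X$ along $l$ at $x$'' is the germ from a \emph{general} $q\in l$, so once you have shown that $S_2,S_3,B_3,C_3$ require $q$ to satisfy a nontrivial equation on $l$, these types are excluded immediately and for every $(x,l)$, exactly as Goose is excluded in Theorem \ref{proj23}. You should replace your final paragraph by this definitional argument and fix the flag dimension to $5$ wherever it enters (in particular, the transversality step bounds the admissible $\A$-codimension by $5$ because the flag is $5$-dimensional, not because of a $4$-dimensional base plus one $q$-parameter).
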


For a smooth surface $X$ in $\Proj^4$, 
the $2$-jet at a point $x \in X$ may be expressed by $(u,v,Q_1, Q_2)$ 
in an affine chart centered at $x$, 
where $Q_1, Q_2$ are binary forms in local coordinates $u, v$ of $X$. 
If $Q_1$ and $Q_2$ are dependent, 
the point $x$ is called an {\it inflection point}. 
If not, they define a line $L$ in the space of binary forms $\simeq \Proj^2$. 
Let $C \subset \Proj^2$ be the curve of perfect squares, a conic; 
a point of $L \cap C$ is the square of a linear form,  and 
the kernel of the linear form is called an {\it asymptotic line} of $X$ at $x$. 
If $L$ is transverse  (resp. tangent) to $C$, 
the point $x$ is called {\it hyperbolic} (resp. {\it parabolic}). 
Let $X$ be generic as in Theorem \ref{proj24}. 
At a hyperbolic point, 
the projection along an asymptotic line is of type $B_1(=S_1)$ or $B_2$;  
the $B_2$-singularity occurs on a curve on $X$,  
while other types $S_2, S_3, B_3$ and $C_3$ are observed 
from some special viewpoints on an asymptotic line, 
thus excluded from our consideration (see \cite[Appendix]{DK}). 
At a parabolic point, 
the projection along the unique asymptotic line is of type $H_2$ or worse $H_3, P_3$;  
in particular, 
the $H_2$-locus is nothing but the parabolic curve. 
The curve admits some discrete points where $H_3$ and $P_3$-singularities appear; 
at the latter points, the parabolic curve meets the $B_2$-curve tangentially 
(the asymptotic line is also tangent to the curve). 
Inflection points are singular points of the parabolic curve; 
the projection along any tangent line is of type $B_1$ or worse. 
As a remark, the classification of singularities of orthogonal projections  in 
the affine setting has some difference \cite{NT}, e.g., 
the $S_2$-singularity occurs on a curve  
and each of types $S_3, B_3$ and  $C_3$ appears on some discrete points.

\

\t
$\bullet \;\;$ {\bf Primals in $4$-space.}  
In the same way as above, 
the contact of any lines with 
a generic smooth $3$-fold $X$ in $\Proj^4$ 
(classically called a {\it primal}) 
has partly been studied in Nabarro \cite{Anna} 
as an application of the $\A$-classification of $(\C^3,0) \to (\C^3,0)$; 
several loci in $X$ up to codimension one are characterized 
by singular projections of $\A$-codimension $\le 4$ (Table \ref{table_3to3}).  
As for the geometry of higher codimensional loci in $X$, 
it seems that nothing has been known so far.

\subsection{Surfaces in $\Proj^3$} 

Let $X \subset \Proj^3$ be a reduced surface of degree $d$ having 
only ordinary singularities, that is,  
crosscap and double and triple points 
-- they are locally defined, respectively, by equations 
$$xy^2-z^2=0, \quad xy=0, \quad xyz=0.$$
Take a resolution of singularities, 
we have a stable map $f: M \to \Proj^3$ with $X=f(M)$ and $M$ smooth 
(it is not necessary that $M$ is embedded in some higher dimensional projective space 
and  $f$ is a linear projection). 
We denote the number of crosscaps and triple points by $C$ and $T$ respectively. 
Let $D\; (\subset X)$ denote the double point locus of $X$; 
then $D$ is a space curve with triple points and singular points at crosscaps. 
Denote by $\epsilon_0$ the degree of $D$. 

We denote the hyperplane class of $\Proj^3$ by 
$$a=c_1(\Ost_{\Proj^3}(1))\in H^2(\Proj^3).$$  
Put integers $\xi_1, \xi_2, \xi_{01}$ such that 
$$
f_*(1)= d  a, \;\;
f_*c_1(TM)=\xi_1  a^2, \;\;
f_*(c_1(TM)^2)=\xi_2 a^3, $$
$$
f_* c_2(TM)=\xi_{01} a^3\in H^*(\Proj^3). 
$$

Now we recall in Table \ref{tp_stable2to3} 
the double and triple point formulae 
of maps $f: M \to N$ from a surface into $3$-space 
 (i.e. Thom polynomials for $A_0^2$ and $A_0^3$), 
where $s_0=f^*f_*(1) \in H^2(M)$ and $s_1=f^*f_*(\bar{c}_1) \in H^4(M)$ 
with $\bar{c}_i=c_i(f^*TN-TM)$  (see e.g. \cite{Kleiman, Kaz03}): 
\begin{table}[htb]
		\centering
    	\begin{tabular}{|c|l|}\hline
		    Type  & Thom polynomial\\ \hline\hline
		    $A_0^2$   & $s_0-\bar{c}_1$ \\ \hline
		    $A_1$   & $\bar{c}_2$ \\ \hline
		    $A_0^3$   & $\frac{1}{2}({s_0}^2-s_1-2s_0\bar{c}_1+2{\bar{c}_1}^2+2\bar{c}_2)$ \\ \hline
    	\end{tabular} \vspace{8pt}
    	\caption{$Tp$ of stable multi-singularities of $\C^2 \to \C^3$.}
	\label{tp_stable2to3}
\end{table}

Applying these universal polynomials to $f: M \to \Proj^3$, 
it immediately follows that  
\begin{align*}
C& =\int_{\Proj^3} f_*(Tp(A_1)(f))=6d-4\xi_1+\xi_2-\xi_{01}, \\
T& =\int_{\Proj^3}\frac{1}{3}(f_*(Tp(A_0^3)(f)))\\
& =\frac{1}{6}(44d-12d^2+d^3-24\xi_1+3d\xi_1+4\xi_2-2\xi_{01}), \\
\epsilon_0& 
=\int_{\Proj^3}\dfrac{1}{2}f_*(Tp(A_0^2)(f))=\dfrac{1}{2}(d^2-4d+\xi_1). 
\end{align*}
Thus we may express $\xi_1, \xi_2, \xi_{01}$ in terms of $\epsilon_0, C, T$ as follows -- 
these formulae are actually classical  \cite[Prop.1]{Piene}; 
indeed, $\xi_{01}-4$ and $\xi_2+1$ coincide with  
the Zeuthen-Segre invariant and the Castelnuovo-Enriques invariant of $X$, respectively, 
in   \cite[pp.223--224]{SR}: 

\begin{lem}\label{Enriques} 
It holds that 
\begin{align*}
	\xi_1&=d(4-d)+2\epsilon_0,\\
	\xi_2 &= d(d-4)^2 +(16-3d)\epsilon_0 +3T -C,\\
	\xi_{01} &= d(d^2-4d+6)+(8-3d)\epsilon_0 +3T -2C.
\end{align*}
\end{lem}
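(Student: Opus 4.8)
The plan is to treat Lemma~\ref{Enriques} as a purely linear-algebraic inversion: the three displayed identities obtained immediately above --- namely $C = 6d - 4\xi_1 + \xi_2 - \xi_{01}$, $\;6T = 44d - 12d^2 + d^3 - 24\xi_1 + 3d\xi_1 + 4\xi_2 - 2\xi_{01}$, and $\;2\epsilon_0 = \xi_1 + d^2 - 4d$ --- already express $C$, $T$, $\epsilon_0$ as inhomogeneous linear functions of the three unknowns $\xi_1,\xi_2,\xi_{01}$ whose coefficients are polynomials in $d$. These were produced by applying the stable Thom polynomials $Tp(A_1)$, $Tp(A_0^3)$, $Tp(A_0^2)$ of Table~\ref{tp_stable2to3} to $f$, pushing forward to $\Proj^3$ and integrating, using $f_*(1)=da$, $f_*c_1(TM)=\xi_1a^2$, $f_*(c_1(TM)^2)=\xi_2a^3$, $f_*c_2(TM)=\xi_{01}a^3$ and $\int_{\Proj^3}a^3=1$. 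So no new geometry is needed; it remains only to solve this $3\times 3$ system for $\xi_1,\xi_2,\xi_{01}$.

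First I would read off $\xi_1$ directly from the double-point equation $2\epsilon_0 = \xi_1 + d^2 - 4d$, which gives $\xi_1 = d(4-d) + 2\epsilon_0$ --- the first asserted formula, and also the reason the remaining system decouples conveniently. Substituting this value of $\xi_1$ into the crosscap and triple-point equations leaves a $2\times 2$ system $\xi_2 - \xi_{01} = C - 6d + 4\xi_1$ and $4\xi_2 - 2\xi_{01} = 6T - 44d + 12d^2 - d^3 + (24-3d)\xi_1$, whose coefficient determinant $1\cdot(-2)-(-1)\cdot 4 = 2$ is nonzero, so the system is uniquely solvable. Eliminating $\xi_{01}$ gives $2\xi_2 = 6T - 2C - 32d + 12d^2 - d^3 + (16-3d)\xi_1$; re-inserting $\xi_1 = 4d - d^2 + 2\epsilon_0$ and collecting powers of $d$ yields $\xi_2 = d(d-4)^2 + (16-3d)\epsilon_0 + 3T - C$, and then $\xi_{01} = \xi_2 - C + 6d - 4\xi_1 = d(d^2-4d+6) + (8-3d)\epsilon_0 + 3T - 2C$. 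This establishes all three formulae.

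I do not expect a genuine obstacle: the only delicate point is the bookkeeping of the $d$-dependent coefficients --- for instance the cubic term $\tfrac32 d^3$ arising from $\tfrac12(16-3d)(4d-d^2)$ must combine with the $-\tfrac12 d^3$ carried along so as to leave exactly $d^3$, and similarly at order $d^2$ --- so I would recompute the coefficient collection twice and test a known instance. A convenient test is a smooth surface of degree $d$ in $\Proj^3$: there $\epsilon_0=C=T=0$ and $M=X$ with $f$ the inclusion, so the formulae should reduce to $\xi_1 = d(4-d)$, $\xi_2 = d(d-4)^2$, $\xi_{01} = d(d^2-4d+6)$; these are exactly $\int_X H|_X\cdot c_1(TX)$ (from adjunction $c_1(TX)=(4-d)H|_X$), $\int_X c_1(TX)^2 = (d-4)^2 d$, and $\int_X c_2(TX) = e(X) = d^3-4d^2+6d$ for a smooth hypersurface, so agreement there confirms both the sign conventions and the arithmetic.
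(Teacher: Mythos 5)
Your proposal is correct and is exactly what the paper does: the lemma is just the inversion of the three linear relations for $C$, $T$, $\epsilon_0$ obtained from pushing forward $Tp(A_1)$, $Tp(A_0^3)$, $Tp(A_0^2)$, which the paper states without further detail (noting the formulae are classical, cf.\ Piene). Your arithmetic and the smooth-hypersurface sanity check both check out.
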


Now we consider projections of $X$ along lines. 
Let $G_3$ denote the Grassmanian manifold 
of $2$-dimensional subspaces $\lambda \subset \C^4$, 
or equivalently the Grassmanian of all lines $l \subset \Proj^3$; 
$$G_3:=G(2,4)=G(\Proj^1,\Proj^3).$$
Note that $\dim G_3=4$. 
Take the flag manifold 
$$F_3=\{(x,l)\in \Proj^3\times G_3\ |\ x\in l\},$$ 
and  the pullback 
$$F_M=\{(p,l)\in M\times G_3\ |\ f(p)\in l\}$$ 
together with the following diagram (cf. \cite{Kulikov}): 
$$
\begin{tikzcd}
		& F_3\arrow{ld}[swap]{\pi}\arrow{rd}{\bar{h}} &  \\
		\Proj^3 & F_M \arrow{u}{i}\arrow{ld}{g}\arrow{r}[swap]{h} & G_3   \\
		M \arrow{u}{f} & &
\end{tikzcd}
$$
Recall the Euler sequence over $\Proj^3$: 
\begin{equation}
\label{exactsq:P3}
\begin{tikzcd}
0 \arrow{r}&\gamma\arrow{r}&\epsilon^4\arrow{r}&Q\arrow{r}& 0
\end{tikzcd}	
\end{equation}
where 
$\epsilon^4$ is the trivial vector bundle of rank $4$ over $\Proj^3$, 
$\gamma$ is the tautological line bundle $\Ost_{\Proj^3}(-1)$ 
and $Q$ is the quotient subbundle of rank 3. 
The projectivization of $Q$ 
is denoted by $\pi:P(Q)\to\Proj^3$; 
the fiber at $x$ is the projective space $P(Q_x)$.
A point of  $P(Q_x)$ corresponds to a line of $Q_x$, 
that defines a projective line $l \subset \Proj^3$ passing through $x$, 
i.e. a flag $x \in l$. 
That leads to the canonical identification 
$$P(Q) = F_3.$$

Over the total space $P(Q)$,  
the tautological line bundle $L$ is defined:  
$$
 L \;  := \Ost_{P(Q)}(-1) \,= \{(x, l, v)\in \pi^*Q\,|\, [v] = l \; \mbox{or}\; v=0  \}. 
$$
This leads to an exact sequence of vector bundles over $P(Q)$: 
\begin{equation}
\label{exactsq:P(Q)}
	\begin{tikzcd}
	0\arrow{r}& L\arrow{r} & \pi^*Q\arrow{r} & Q' \arrow{r} & 0.
	\end{tikzcd}
\end{equation}
The Euler sequence over the Grassmaniann $G_3$ is 
\begin{equation}
\label{exactsq:G3}
\begin{tikzcd}
0 \arrow{r} & S\arrow{r}&\epsilon^4_{G_3}\arrow{r}&V\arrow{r}&0 
\end{tikzcd}
\end{equation}
where $S$ is the tautological bundle of rank $2$: 
$$S=\{(\lambda,v)\in G_3\times\C^4\ |\ v\in\lambda\},$$
and $V$ is the quotient bundle of rank $2$. 
Take the pullback of this sequence via $\bar{h}: P(Q)=F_3 \to G_3$. 
At a point $(x, l) \in P(Q)$, 
the fiber $\bar{h}^*S_{(x, l)}$ is identified with 
the $2$-dimensional subspace $\lambda$ 
so that $P(\lambda)=l \subset \Proj^3$ 
(the relation $x \in l$ corresponds to $\gamma_x \subset \lambda$, 
where $\gamma_x$ is the line representing $x$ in $\C^4$). 
So we may choose a line 
$L_l \subset \C^4$ 
so that 
$$\lambda=\lambda_{(x, l)} = \gamma_x \oplus L_l,$$
e.g. take the orthogonal complement to $\gamma_x$ in $\lambda$ 
by a fixed Hermitian metric of $\C^4$. 
That yields an isomorphism of (topological) vector bundles 
$$\bar{h}^*S \simeq \pi^*\gamma \oplus L.$$ 
To each $(x, l) \in F_3$, 
we associate the viewpoint 
\begin{equation}\label{q}
q=q_{(x,l)}:=[L_l] \in \Proj^3
\end{equation}
and the projection 
$$pr_q: \Proj^3 -\{q\} \to \Proj_{(x,l)}^2:=P(\gamma_x\oplus V_l).$$
Recall that a stable map $f: M \to X \subset \Proj^3$ is given. 
Let $p \in M$ and $x=f(p)$. 
Given a line $l\in \Proj^3$, 
we can define a holomorphic map-germ at $p$ 
\begin{equation}\label{varphi}
\varphi_{p,l}:(M,p)\longrightarrow (\Proj_{(x,l)}^2, x)
\end{equation}
by composing $pr_q$ and  $f: M \to \Proj^3$. 
Note that the tangent space of the screen projective plane is identified as follows: 
$$T_x \Proj^2_{(x,l)}=T_xP(\gamma_x\oplus V_l) \simeq \gamma_x^* \otimes V_l.$$
Thus we get a family of holomorphic map-germs 
$$
\begin{tikzcd}
g^*TM \arrow{rd}\arrow{rr}{\varphi}
& &
g^*\gamma^* \otimes h^*V \arrow{ld}  \\
& F_M  & 
\end{tikzcd}
$$
For short, 
we denote by  
 $TM$ and $\gamma^*\otimes V$ 
the pullback $g^*TM$ and $g^*(\gamma^*) \otimes h^*V$ over $F_M$, respectively. 

Notice that the family $\varphi$ is admissible 
for a generic surface $X$ in Theorem \ref{proj23} 
and for a generic choice of our viewpoints $q_{(x, l)}$ lying on $l$. 
That follows from the proof of  Theorem \ref{proj23} 
as mentioned in Remark \ref{Platonova}. 
The projection from any $q\; (\not=x) \in l$ yields 
a holomorphic family $\varphi_{(x,l,q)}$ 
over the space of $(x, l, q)$. 
By the transversality of the jet extension of $\varphi_{(x,l,q)}$ with $\A$-orbits 
(as discussed in \cite{Platonova, Kabata}), 
the space of $(x, l, q)$ is stratified according to the $\A$-types. 
Then it suffices to choose 
a continuous section $(x, l) \mapsto (x, l, q_{(x,l)})$ 
to be transverse to the strata. 

Let $X$ be generic as in Theorem \ref{proj23}. 
For each $\A$-type $\eta$ of  
plane-to-plane map-germs (Table \ref{table_2to2}), set 
$$\eta(\varphi):=\{\,(x,l)\in F_M\ |\ \varphi_{p,l}\sim_{\A}\eta\,\}.$$  
Note that 
for each $\eta$ except for Goose, 
the closure of $\eta(\varphi)$ does not depend 
on the choice of $q_{(x,l)}$.  
Projecting it to $X$, we have the $\eta$-singularity locus of $X$ 
as explained in \S 3.1: 
\begin{itemize} 
\item the locus of Lips/Beaks = the parabolic curve; 
\item the locus of Swallowtail = the flecnodal curve; 
\item the locus of Butterfly = degenerate flecnodal points; 
\item the locus of Gulls = cusps of Gauss; 
\item the locus of corank $2$ singularity = crosscaps; 
\item the locus of Goose  (in the above sense) 
depends on our choice of $q_{(x,l)}$, see 
Remark \ref{goose}. 
\end{itemize}
Then by Theorem \ref{main_thm}  we have 
$$\Dual[\overline{\eta(\varphi)}]=Tp_{\A}(\eta)(\bc,\bc')\in H^*(F_M)$$
where 
$$\bc=c(TM), \quad \bc'=c(\gamma^*\otimes V).$$

Let us compute the degree of the universal polynomial. 
Set 
$$t=c_1(L^*) \in H^2(P(Q)),$$ 
then it is well-known that 
	\begin{align*}
		H^*(P(Q))&=H^*(\Proj^3)[t]/\langle t^3+c_1(Q)t^2+c_2(Q)t+c_3(Q)\rangle \\
		&=\Z[a,t]/\langle a^4,t^3+at^2+a^2t+a^3\rangle. 
	\end{align*}
Note that 
\begin{align*}
	c(\bar{h}^*S)&=c(\gamma\oplus L)=(1-a)(1-t),\\
	c(\bar{h}^*V)&=\frac{1}{c(\bar{h}^*S)}= 1+(a+t)+(a^2+t^2+at)+\cdots, 
\end{align*}
and hence 
\begin{align*}
	c(\gamma^*\otimes V)&=(1+\alpha+a)(1+\beta+a) \quad \text{($\alpha, \beta$: Chern roots)}\\
	&=1+3a + t+3a^2+2at+t^2. 
	\end{align*}
So $Tp_{\A}(\eta)(\bc,\bc')$ in $H^*(F_M)$ 
is written by $c_i(TM)$ together with $a$ and $t$. 
Push it down via $\pi \circ i=f\circ g: F_M \to \Proj^3$ 
by using well-known formulae 
$$\pi_*(t^2)=1, \;\; \pi_*(t^3)=-a, \;\; \pi_*(t^4)=\pi_*(t^5)=\cdots=0,$$ 
then we get the degree in terms of $\xi_1$, $\xi_2$, $\xi_{01}$ and $a$.  
We summarize the result as follows:

\begin{thm} \label{dCT}
For a generic surface $X$ of degree $d$ in $\Proj^3$ with ordinary singularities, 
we take a stable map $f: M \to \Proj^3$ with $X=f(M)$. 
Then the degree of the locus having singular projection 
with prescribed type are expressed by the Chern numbers $d, \xi_1, \xi_2, \xi_{01}$ 
associated to $f$ as in Table \ref	{singular_projection_P3}. 
\end{thm}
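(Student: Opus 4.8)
The plan is to follow exactly the bundle-theoretic recipe assembled in \S 3.2 and simply execute the pushforward for each $\A$-type $\eta$ in Table \ref{table_2to2} (excluding Goose, which is handled separately via Remark \ref{goose}). First, for each $\eta$ I would take the Thom polynomial $Tp_\A(\eta)(\bc,\bc')$ from Table \ref{table_tp2to2}, now living in $H^*(F_M)$, and substitute $\bc=c(TM)$ together with $\bc'=c(\gamma^*\otimes V)$, where the latter has already been computed above as $1+3a+t+3a^2+2at+t^2$ and $c(TM)$ is expressed via the pullback $g^*$ of the Chern classes of $TM$. Since by Theorem \ref{thm:A-classification} and the admissibility established after \eqref{varphi} we have $\Dual[\overline{\eta(\varphi)}]=Tp_\A(\eta)(\bc,\bc')$, the class to be integrated is unambiguous; the content is purely the evaluation.

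Next I would reduce everything in $H^*(F_M)=H^*(P(Q))$-pulled-back-to-$M$ to the monomial basis in $a$ and $t$ using the relations $a^4=0$ and $t^3=-at^2-a^2t-a^3$, collecting the coefficient of each $a^it^j$. Then I would push down along $f\circ g=\pi\circ i:F_M\to\Proj^3$ in two stages: first apply the standard projective-bundle formulae $\pi_*(t^2)=1$, $\pi_*(t^3)=-a$, $\pi_*(t^j)=0$ for $j\ge 4$ to land in $H^*(\Proj^3)$, keeping track of the Chern classes $c_1(TM),c_2(TM)$ of the surface that ride along through $g^*$; second, apply $f_*$ using the three Chern-number identities $f_*(1)=da$, $f_*c_1(TM)=\xi_1a^2$, $f_*(c_1(TM)^2)=\xi_2a^3$ and $f_*c_2(TM)=\xi_{01}a^3$ to obtain a multiple of $a^3$, whose coefficient is the desired degree. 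Wherever a Thom polynomial is written in quotient Chern classes (Fold, Cusp, Swallowtail, Butterfly, Sharksfin), I would first re-expand $\bar c_i$ in terms of $c_i(TM)$ and $c_i(\gamma^*\otimes V)$ before performing this reduction, so that the same bookkeeping applies uniformly.

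The only genuinely delicate point is not the algebra but the justification that the output is meaningful enumerative data, and that is precisely where the hypotheses of Theorem \ref{proj23} and the admissibility discussion enter: one must know that for a generic $X$ with ordinary singularities and a generic section $(x,l)\mapsto(x,l,q_{(x,l)})$ the family $\varphi$ is admissible for each relevant $\eta$, so that $\overline{\eta(\varphi)}$ has the expected codimension and $[\overline{\eta(\varphi)}]$ really equals $Tp_\A(\eta)(\bc,\bc')$; this is supplied by Remark \ref{Platonova} and the transversality argument already given, together with the identification of each $\eta$-locus with a classical curve or point set (parabolic curve, flecnodal curve, degenerate flecnodal points, cusps of Gauss, crosscaps). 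Granting that, the remaining work is the mechanical substitution-and-pushforward just described, carried out once for each row; assembling the resulting coefficients of $a^3$ into Table \ref{singular_projection_P3} completes the proof. I expect the main obstacle, such as it is, to be purely organizational: managing the length of the polynomials (especially the codimension-$4$ cases Gulls and Butterfly) through the two successive pushforwards without arithmetic slips, which is best handled by a symbolic computation rather than by hand.
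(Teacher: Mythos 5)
Your proposal is correct and follows essentially the same route as the paper: substitute $\bc=c(TM)$, $\bc'=c(\gamma^*\otimes V)=1+3a+t+3a^2+2at+t^2$ into $Tp_\A(\eta)$, reduce in $\Z[a,t]/\langle a^4,\,t^3+at^2+a^2t+a^3\rangle$, push down along $\pi\circ i=f\circ g$ via $\pi_*(t^2)=1$, $\pi_*(t^3)=-a$ and the Chern-number identities for $f_*$, with admissibility supplied by Theorem \ref{proj23} and Remark \ref{Platonova}. This is exactly the paper's argument (the minor rephrasing of the two-stage pushforward, through $M$ rather than $F_3$, is harmless by the projection formula), so nothing further is needed.
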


\begin{table}
		\centering
        \begin{tabular}{|c|l|}\hline 
	        Type &  Degree \\ \hline \hline
	       Parabolic curve  & $8d-4\xi_1$ \\ \hline
	       Flecnodal curve & $20d-11\xi_1$ \\ \hline
	       Deg. flecnodal pt &$5 (30 d - 5 \xi_{01} + 12 (-3 \xi_1 + \xi_2))$ \\ \hline
	        Cusp of Gauss & $62 d + 3 \xi_{01} - 72 \xi_1 + 19 \xi_2$ \\ \hline
	        Crosscap &$6d-\xi_{01}-4\xi_1+\xi_2$  \\ \hline
        \end{tabular} \vspace{8pt}
       	\caption{Degree of loci on a surface in $\Proj^3$.}
	\label{singular_projection_P3}
\end{table}

By  Lemma \ref{Enriques}, we have the following expressions 
in terms of $d, C, \epsilon_0, T$: 

\begin{cor}
	\label{ordsing2to3}
	The degrees as in Theorem \ref{dCT} are also expressed by 
	four numerical characters such as 
	the degree $d$ of $X$, the number of crosscaps $C$, the number of triple points $T$, 
	and the degree of double curve $\epsilon_0$ in  Table \ref{classical_formulae}. 
\end{cor}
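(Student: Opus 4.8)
The statement is a formal consequence of Theorem \ref{dCT} and Lemma \ref{Enriques}, so the proof is essentially a substitution. The plan is as follows. Theorem \ref{dCT} (Table \ref{singular_projection_P3}) gives each of the five degrees as an explicit polynomial with integer coefficients in the four Chern numbers $d,\xi_1,\xi_2,\xi_{01}$ attached to $f\colon M\to\Proj^3$. Lemma \ref{Enriques} inverts the relations obtained earlier from the double-point, triple-point and crosscap formulae, expressing $\xi_1,\xi_2,\xi_{01}$ as polynomials in $d$ and the three classical characters $\epsilon_0$ (degree of the double curve), $C$ (number of crosscaps), $T$ (number of triple points). Since a polynomial in polynomials is again a polynomial, substituting the formulae of Lemma \ref{Enriques} into those of Theorem \ref{dCT} expresses each degree as a polynomial in $d,\epsilon_0,C,T$ with integer coefficients, which is precisely the content of Table \ref{classical_formulae}.

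Concretely, first I would handle the two codimension-one loci: the parabolic curve has degree $8d-4\xi_1$ and the flecnodal curve $20d-11\xi_1$, so only the first line of Lemma \ref{Enriques}, namely $\xi_1=d(4-d)+2\epsilon_0$, is needed, and the result is immediate. Next I would treat the point counts --- degenerate flecnodal points $5(30d-5\xi_{01}+12(-3\xi_1+\xi_2))$ and cusps of Gauss $62d+3\xi_{01}-72\xi_1+19\xi_2$ --- by substituting all three expressions of Lemma \ref{Enriques}, in particular $\xi_2=d(d-4)^2+(16-3d)\epsilon_0+3T-C$ and $\xi_{01}=d(d^2-4d+6)+(8-3d)\epsilon_0+3T-2C$, and collecting the resulting monomials in $d,\epsilon_0,C,T$. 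Finally, for the crosscap count $6d-\xi_{01}-4\xi_1+\xi_2$, the same substitution returns $C$ identically, as it must since these Chern numbers were defined precisely to make that formula hold; this identity serves as a consistency check on the whole calculation.

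The remaining work is purely bookkeeping: expanding the products, collecting terms, and matching the outcome against the entries of Table \ref{classical_formulae}. The only place where care is needed is keeping track of the cubic-in-$d$ contributions coming from $\xi_{01}$ and $\xi_2$ inside the cusp-of-Gauss and degenerate-flecnodal expressions; the fact that the resulting polynomials reproduce the classical Salmon-Cayley-Zeuthen numbers is the substantive check that simultaneously corroborates Theorem \ref{dCT} and Lemma \ref{Enriques}. No genericity hypothesis beyond that of Theorem \ref{dCT} is used, since the passage from one set of characters to the other is algebraic.
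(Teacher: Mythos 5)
Your proposal is correct and matches the paper's own (implicit) argument exactly: the corollary is obtained by substituting the expressions for $\xi_1,\xi_2,\xi_{01}$ from Lemma \ref{Enriques} into the formulae of Theorem \ref{dCT} and collecting terms, with the crosscap entry reducing to $C$ as a built-in consistency check. Nothing further is needed.
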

	\begin{table}
		\centering
        	\begin{tabular}{|c|l|}\hline 
        		Type &  Degree \\ \hline \hline
        		Parabolic curve  & $4d(d-2)-8\epsilon_0$ \\ \hline
        		Flecnodal curve & $d(11d-24)-22\epsilon_0$ \\ \hline
        		Deg. flecnodal pt  &$5d(d-4)(7d-12)-10 C + 105 T+5\epsilon_0(80-21d)$ \\ \hline
        		Cusp of Gauss  & $2d(d-2)(11d-24)-25C+66T+\epsilon_0(184-66d)$ \\ \hline
       			Crosscap & $C$ \\ \hline
         	\end{tabular} \vspace{8pt}
         	\caption{Expression in $d$, $T$, $C$ and $\epsilon_0$. }
	\label{classical_formulae}
	\end{table}
Table \ref{classical_formulae} generalizes some classical formulae 
of Salmon-Cayley-Zeuthen for smooth surfaces in $\Proj^3$, 
see Remark \ref{Kulikov} below.

\begin{rem}\upshape \label{goose}
{\bf (Tp for Goose)} 
In entirely the same way as above, 
we compute the degree for $Tp_\A$ of Goose type 
and obtain the number 
$$22 d - \xi_{01} - 24 \xi_1 + 7 \xi_2 =
2d(d-2)(3d-8) -5C+18T+\epsilon_0(56-18d).$$ 
It counts the following special parabolic points. 
As mentioned in \S 3.1, 
at each parabolic point $x$,  
there is a unique viewpoint on the asymptotic line $l$, say $\tilde{q}_{(x,l)} \in l$, 
from which the germ at $x$ of 
the central projection is of the Goose type 
(Lips/Beaks is observed from any other points on $l$). 
In fact, 
the locus of special viewpoints $\tilde{q}_{(x,l)}$ in $\Proj^3$ is a space curve 
formed by the cuspidal edge of the {\it scroll} (ruled surface) 
consisting of all asymptotic lines over parabolic curves \cite{Arnold, Platonova}.   
On the other hand, in our construction of the family $\varphi$, 
we have chosen the viewpoint $q_{(x,l)}$ 
at the `infinity with respect to $x$'. 
So we have different two sections $q_{(x,l)}$ and $\tilde{q}_{(x,l)}$ 
of the projective line bundle $P(\lambda)$ over the parabolic curve  
whose fibers are the asymptotic lines. 
The intersection number of these two sections is just 
the above degree of $Tp_\A$ of Goose. 
\end{rem}

\begin{rem}\upshape \label{Kulikov}
{\bf (Classical formulae)} 
For a generic smooth surface in $\Proj^3$ of degree $d$, 
expressions in terms of $d$ for  
the degrees of  parabolic and flecnodal curves and 
the number of cusps of Gauss  are classically well-known,  
see  Salmon \cite[p.559, p.602]{Salmon} 
-- those are recovered in Table \ref{classical_formulae} with $T=C=\epsilon_0=0$. 
In 80's, following  the classification of projections \cite{Arnold, Platonova}, 
Kulikov \cite{Kulikov} rediscovered  
the degree of the flecnodal curve and the number of the Butterfly locus 
by applying Thom polynomials of stable $A_k$-singularities (\cite{Porteous})  
to the map $h:F_M \to G_3$ between $4$-folds. 
However, the parabolic curve could not be dealt in the same way. 
Our approach is close to Kulikov's but more consistent; 
degrees of all singularity loci are simply obtained 
in a unified way using  $Tp_\A$ of {\it unstable} $\A$-types. 
\end{rem}

\subsection{Surfaces in $\Proj^4$} 
We extend the above argument to the following situation. 
Let $X$ be a generic surface of degree $d$ with transverse double points in $\Proj^4$, 
and $f:M\rightarrow \Proj^4$ an immersion with $X=f(M)$ and $M$ smooth,  
which resolves the double points.  
Put $G_4:=G(2,5)=G(\Proj^1,\Proj^4)$.  
We set the flag manifold to be 
$$F_4=\{(x,l)\in \Proj^4\times G_4\ |\ x\in l\},$$ 
and set the pullback 
$$F_M=\{(p,l)\in M\times G_4\ |\ f(p)\in l\}$$ 
with the diagram of projections and inclusions similar to the previous one. 

Take the Euler sequence over $\Proj^4$: 
\begin{equation}
\label{exactsq:P4}
\begin{tikzcd}
0 \arrow{r}&\gamma\arrow{r}&\epsilon^5_{\Proj^4}\arrow{r}&Q\arrow{r}& 0
\end{tikzcd}	
\end{equation}
where  $\gamma$ is the tautological line bundle, and then 
$P(Q) = F_4$. 
Over the Grassmaniann $G_4$, 
let $S$ denote the tautological bundle of rank $2$ 
and $V$ the quotient bundle of rank $3$. 
In entirely the same way as in the previous subsection, 
we define 
$\varphi: g^*TM \to g^*\gamma^*\otimes h^*V$, 
as a family of holomorphic map-germs $(\C^2,0)\rightarrow (\C^3,0)$ 
over $F_M$. 

We set $a:=c_1(\Ost_{\Proj^4}(1))$ and 
$t:=c_1(\Ost_{P(Q)}(1))$.  
Note that 
\begin{align*}
H^*(P(Q))
&=\Z[a,t]/\langle a^5,t^4+at^3+a^2t^2+a^3t+a^4\rangle.  
\end{align*}
A simple computation shows that 
\begin{align*}
	c_1(\gamma^*\otimes V)&=4a+t,\\
	c_2(\gamma^*\otimes V)&=6a^2+3at+t^2,\\
	c_3(\gamma^*\otimes V)&=4 a^3 + 3 a^2 t + 2 a t^2 + t^3. 
\end{align*}
These data are substituted for $c_j'$ in $Tp_\A(\eta) \in H^*(F_M)$ (Table \ref{table_tp2to3}) 
for each $\A$-type $\eta$ appearing in Theorem \ref{proj24},  
and then we push it out to $\Proj^4$. 
Note that  $\pi_*(t^3)=1$, $\pi_*(t^4)=-a$, $\pi_*(t^5)=\cdots = 0$. 
Put  
$$f_*(1)= d  a^2, \;\; f_*c_1(TM)=\xi_1  a^3,$$
$$f_*(c_1(TM)^2)=\xi_2 a^4, \;\; f_* c_2(TM)=\xi_{01} a^4.$$ 
We summarize the result as follows: 

\begin{thm} \label{dCT2}
For a generic immersed surface $X$ of degree $d$ in $\Proj^4$, 
we take an immersion $f: M \to \Proj^4$ with $X=f(M)$. 
Then the degree of the locus having singular projection 
with prescribed type are expressed by the Chern numbers $d, \xi_1, \xi_2, \xi_{01}$ 
associated to $f$ as in Table \ref{singular_projection_P4}. 
\end{thm}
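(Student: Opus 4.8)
The plan is to run, for the $\A$-classification of map-germs $(\C^2,0)\to(\C^3,0)$ and for $\Proj^4$, the identical argument that proves Theorem \ref{dCT}. First I would observe that the family of holomorphic map-germs $\varphi: g^*TM \to g^*\gamma^*\otimes h^*V$ constructed above over $F_M$ is exactly the family of central projections of $f:M\to\Proj^4$: at a flag $(p,l)$ with $x=f(p)$, the restriction of $\varphi$ to the fibre is $\A$-equivalent to the germ at $x$ of the composition of $f$ with the linear projection from the viewpoint $q_{(x,l)}$, chosen at infinity with respect to $x$ on $l$ as in \S 3.2, onto the screen hyperplane $\Proj^3_{(x,l)}=P(\gamma_x\oplus V_l)$ transverse to $l$. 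By Theorem \ref{proj24}, together with the transversality arguments of \cite{Mond82, DK} (the analogue of Remark \ref{Platonova}), for a generic immersed surface $X$ and a generic section $(x,l)\mapsto q_{(x,l)}$ the jet extension of $\varphi$ is transverse to the $\A$-orbit closures of every type $\eta$ occurring in Theorem \ref{proj24}; hence $\varphi$ is admissible with respect to each such $\eta$ in the sense of \S 2.1, and $\overline{\eta(\varphi)}$ projects onto the $\eta$-singularity locus of $X$.

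With admissibility in hand, Theorem \ref{main_thm} gives, for each such $\eta$,
$$\Dual[\overline{\eta(\varphi)}]=Tp_\A(\eta)(\bc,\bc')\in H^*(F_M),\qquad \bc=c(TM),\quad \bc'=c(\gamma^*\otimes V),$$
with $Tp_\A(\eta)$ read off from Table \ref{table_tp2to3}. The remainder is computation. Using the presentation $H^*(P(Q))=\Z[a,t]/\langle a^5,\,t^4+at^3+a^2t^2+a^3t+a^4\rangle$ and the formulae recorded above for $c_1,c_2,c_3$ of $\gamma^*\otimes V$ in terms of $a$ and $t$, I substitute the $c'_j$ into $Tp_\A(\eta)$ and expand it as a polynomial in the $c_i(TM)$ together with $a$ and $t$. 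Pushing this class forward along $\pi\circ i=f\circ g:F_M\to\Proj^4$, first via $\pi_*(t^3)=1$, $\pi_*(t^4)=-a$, $\pi_*(t^{\ge 5})=0$ and then using the defining relations $f_*(1)=d\,a^2$, $f_*c_1(TM)=\xi_1 a^3$, $f_*(c_1(TM)^2)=\xi_2 a^4$, $f_*c_2(TM)=\xi_{01}a^4$, turns each $\Dual[\overline{\eta(\varphi)}]$ into an integer linear combination of $d,\xi_1,\xi_2,\xi_{01}$; the degree of the $\eta$-singularity locus of $X$ is the resulting coefficient of the appropriate power of $a$. Assembling these expressions for the types $B_1, B_2, H_2, H_3, P_3$ fills in Table \ref{singular_projection_P4}.

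The step that requires genuine care — and the main obstacle — is the bookkeeping of multiplicities, i.e. that the Thom-polynomial class pushed to $X$ counts each component of the $\eta$-singularity locus exactly once. For the curves ($B_2$, and $H_2$ = parabolic curve) this is immediate, but for the zero-dimensional loci $H_3$ and $P_3$ one must rule out an excess factor coming from the degenerate incidence described after Theorem \ref{proj24}: at a $P_3$-point the parabolic curve and the $B_2$-curve are tangent, and the asymptotic line is tangent to these curves as well — exactly the situation that forced a check for the cusp-of-Gauss/Gulls count in \S 3.2. This is resolved by the same device: one verifies that for generic $X$ the section $(x,l)\mapsto q_{(x,l)}$ meets the strata of the space of triples $(x,l,q)$ transversally, so that $\overline{\eta(\varphi)}$ is reduced of the expected dimension and the identity of Theorem \ref{main_thm} holds without correction. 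Finally, just as Goose was excluded for plane-to-plane germs, the types $S_2, S_3, B_3, C_3$ are only seen from special viewpoints on an asymptotic line and never occur for our section $q_{(x,l)}$, so their absence from Table \ref{singular_projection_P4} is legitimate.
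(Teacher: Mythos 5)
Your proposal is correct and follows essentially the same route as the paper: construct the family $\varphi: g^*TM \to g^*\gamma^*\otimes h^*V$ over $F_M$, use Theorem \ref{proj24} and the transversality argument (as in Remark \ref{Platonova}) to get admissibility, apply Theorem \ref{main_thm} with the Thom polynomials of Table \ref{table_tp2to3}, substitute the computed Chern classes of $\gamma^*\otimes V$ in $a,t$, and push forward via $\pi_*(t^3)=1$, $\pi_*(t^4)=-a$ and the relations $f_*(1)=d\,a^2$, $f_*c_1=\xi_1 a^3$, $f_*(c_1^2)=\xi_2 a^4$, $f_*c_2=\xi_{01}a^4$. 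Your extra discussion of multiplicities at the zero-dimensional $H_3$ and $P_3$ loci is a reasonable supplement, but it is subsumed in the admissibility/genericity hypothesis exactly as in the paper's treatment.
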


	\begin{table}
		\centering	        
			\begin{tabular}{|c|l|}\hline 
		        Type &  Result \\ \hline \hline
		        $B_2$  & $25d-16\xi_1$\\ \hline
		        $H_2$ &$10d-6\xi_1$\\ \hline
		        $H_3$ & $5(42d-11\xi_{01}-51\xi_1+19\xi_2)$ \\ \hline
		        $P_3$ &$80d-15\xi_{01}-95\xi_1+33\xi_2$ \\ \hline
	        \end{tabular} \vspace{8pt}
	       	\caption{Degree of loci on a surface in $\Proj^4$.}
	        \label{singular_projection_P4}
	\end{table}

\begin{cor}\label{degree2to3}
	In case that $M=X$ is a smooth complete intersection of degree $d_1$ and $d_2$ in $\Proj^4$, 
	 degrees of the loci having singular projections are expressed in terms of $d_1, d_2$ as in Table \ref{complete_int}. 
\end{cor}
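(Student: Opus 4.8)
The plan is to feed into Theorem~\ref{dCT2} the four Chern numbers $d,\xi_1,\xi_2,\xi_{01}$ attached to a smooth complete intersection, since that theorem already expresses every entry of Table~\ref{singular_projection_P4} as a linear combination of those numbers. Thus the whole problem reduces to computing $d,\xi_1,\xi_2,\xi_{01}$ for $f\colon M=X\hookrightarrow\Proj^4$, where $X$ is the transverse zero locus of generic forms of degrees $d_1,d_2$, and then substituting and simplifying.

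First I would recall the standard intersection-theoretic input. With $a=c_1(\Ost_{\Proj^4}(1))$ as in the text, the fundamental class of a $(d_1,d_2)$ complete intersection is $d_1d_2\,a^2$, so $f_*(1)=d_1d_2\,a^2$ (hence $d=d_1d_2$), and by the projection formula $f_*((f^*a)^k)=d_1d_2\,a^{k+2}$ for all $k\ge 0$. The normal bundle is $N=\Ost_X(d_1)\oplus\Ost_X(d_2)$, so from
\[
0\to TX\to T\Proj^4|_X\to N\to 0,\qquad 0\to\Ost_X\to\Ost_X(1)^{\oplus 5}\to T\Proj^4|_X\to 0
\]
one gets $c(TX)=(1+f^*a)^5\big/\big((1+d_1 f^*a)(1+d_2 f^*a)\big)$, and expanding to second order,
\[
c_1(TX)=(5-d_1-d_2)\,f^*a,\qquad c_2(TX)=\big(10-5(d_1+d_2)+d_1^2+d_1d_2+d_2^2\big)(f^*a)^2 .
\]
Pushing forward by $f$ then yields
\[
\xi_1=(5-d_1-d_2)d_1d_2,\quad \xi_2=(5-d_1-d_2)^2 d_1d_2,\quad \xi_{01}=\big(10-5(d_1+d_2)+d_1^2+d_1d_2+d_2^2\big)d_1d_2 .
\]

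It then remains to substitute these values into the four rows of Table~\ref{singular_projection_P4} and collect coefficients of the monomials in $d_1,d_2$; this is a purely mechanical step. For instance $25d-16\xi_1$ becomes $d_1d_2(16d_1+16d_2-55)$ and $10d-6\xi_1$ becomes $d_1d_2(6d_1+6d_2-20)$, matching the formulae announced in the Introduction, and similarly for the $H_3$ and $P_3$ rows; the result is Table~\ref{complete_int}.

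The one point that genuinely needs attention — essentially the only obstacle in an otherwise routine argument — is to confirm that a \emph{generic} complete intersection of bidegree $(d_1,d_2)$ indeed satisfies the genericity hypothesis of Theorem~\ref{dCT2} (equivalently Theorem~\ref{proj24}), so that the Thom-polynomial degrees apply verbatim with no extra contributions. This holds because the relevant conditions — transversality of the jet extension of the projection family $\varphi$ to the $\A$-orbit closures, in the sense recalled in Remark~\ref{Platonova} — are Zariski-open and nonempty in the linear system of complete intersections, while $d,\xi_1,\xi_2,\xi_{01}$ are deformation invariants, so the values computed above are the correct ones for the generic member. Granting this, Theorem~\ref{dCT2} applies directly and the corollary follows.
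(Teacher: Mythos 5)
Your proposal is correct and follows essentially the same route as the paper: compute $d=d_1d_2$, $\xi_1=(5-d_1-d_2)d_1d_2$, $\xi_2=(5-d_1-d_2)^2d_1d_2$, $\xi_{01}=(10-5(d_1+d_2)+d_1^2+d_1d_2+d_2^2)d_1d_2$ from $c(TX)=(1+a)^5/((1+d_1a)(1+d_2a))$ via the Euler and normal-bundle sequences, then substitute into the rows of Table \ref{singular_projection_P4}; the resulting expressions agree with Table \ref{complete_int}. Your added remark on the genericity of a generic complete intersection is consistent with the hypotheses of Theorem \ref{dCT2} and does not change the argument.
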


\begin{table}
		\centering
    		\begin{tabular}{|c|l|}\hline 
		        Type &  Result \\ \hline \hline
		        $B_2$  & $d_1d_2(16d_1+16d_2-55)$\\ \hline
		        $H_2$ &$d_1d_2(6d_1+6d_2-20)$\\ \hline
		        $H_3$ & $5d_1d_2(8{d_1}^2+8{d_2}^2+27d_1d_2-84d_1-84d_2+152)$\\ \hline
		        $P_3$ &$d_1d_2(18{d_1}^2+18{d_2}^2+51d_1d_2-160d_1-160d_2+280)$ \\ \hline
    		\end{tabular} \vspace{8pt}
		\caption{Expression by $d_1, d_2$ for a complete intersection surface in $\Proj^4$. }
		\label{complete_int}
\end{table}

\subsection{Primals in $\Proj^4$}
Numerical projective characters of a $3$-fold $X$ in $\Proj^4$ (primals) 
were studied by L. Roth  (cf. \cite{SR}). 
In our another paper \cite{SO}, 
we have reconsidered  Roth's work 
by applying Thom polynomials to generic projections; 
for instance, analogous formulae to Lemma \ref{Enriques} are easily obtained. 
In the present paper, as seen above, 
we are concerned with counting the number of non-generic projections; 
seemingly, that has not been treated for $3$-folds in any literature so far. 
We give only a partial result for loci with codimension one of non-generic projections. 
Assume that $X$ is smooth and of degree $d$. 

The flag manifold $F_X$ of pairs $(x, l)$ of points and lines 
with $x \in l$  is now of dimension $6$, 
and there is associated a family of holomorphic map-germs 
$\varphi: g^*TX \to g^*\gamma^*\otimes h^*V$ over $F_X$. 
Chern classes $c_i(\gamma^*\otimes V)$ have been computed in 3.2, and 
$c(TX)=(1+a)^5(1+d a)^{-1}$. 
Substituting them to $\bc, \bc'$ in $Tp_\A(\eta)$ in Table \ref{table_tp3to3}, 
we obtain the following:

\begin{thm} 
For a generic smooth hypersurface $X$ of degree $d$ in $\Proj^4$, 
the degree of the locus having singular projection 
with type $A_3, A_4, C, D$ are expressed in Table \ref{degreeP4}. 
\end{thm}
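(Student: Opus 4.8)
The plan is to run, essentially verbatim, the argument behind Theorems~\ref{dCT} and~\ref{dCT2}, now for the family of map-germs $(\C^3,0)\to(\C^3,0)$ arising from central projections of the primal. First I would form the $6$-dimensional flag manifold $F_X=\{(p,l)\in X\times G(2,5)\mid f(p)\in l\}$, which is the pullback along $f\colon X\hookrightarrow\Proj^4$ of $P(Q)=F_4\to\Proj^4$. Exactly as in \S\S3.2--3.3, one attaches to each $(x,l)$ the viewpoint $q_{(x,l)}$ lying on $l$ ``at infinity with respect to $x$'' (see (\ref{q})) and the projection to the hyperplane $\Proj^3_{(x,l)}=P(\gamma_x\oplus V_l)$ transverse to $l$ at $x$; composing with $f$ yields a continuous family of holomorphic map-germs $\varphi\colon g^*TX\to g^*\gamma^*\otimes h^*V$ over $F_X$ whose germ at $(x,l)$ is the projection of $X$ along $l$ at $x$. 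By Nabarro's classification \cite{Anna}, the $(\C^3,0)\to(\C^3,0)$ analogue of Theorems~\ref{proj23} and~\ref{proj24}, for a generic smooth hypersurface $X$ the $\A$-types of these projections that govern loci of codimension $\le 1$ in $X$ are precisely $A_3, A_4, C, D$ (the corank-$2$ type $I_{2,2}$ is impossible since $X$ is smooth), so it is enough to treat these four.

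Next I would check that $\varphi$ is admissible with respect to each of $A_3,A_4,C,D$, so that Theorem~\ref{thm:A-classification} gives $\Dual[\overline{\eta(\varphi)}]=Tp_\A(\eta)(c(TX),c(\gamma^*\otimes V))$ in $H^*(F_X)$. Following Remark~\ref{Platonova}, the genericity of $X$ in \cite{Anna} is precisely transversality of the jet extension of the three-parameter family $\varphi_{(x,l,q)}$ over the space of triples $(x,l,q)$ to the $\A$-orbits; then one picks the section $(x,l)\mapsto(x,l,q_{(x,l)})$ transverse to the induced stratification. For $\eta\in\{A_3,A_4,C,D\}$ the local equations of the $\eta$-locus in the $(x,l,q)$-space carry no equation in $q$ --- in contrast to the Goose-type behaviour recalled in Remarks~\ref{Platonova} and~\ref{goose} --- so $\overline{\eta(\varphi)}$ is independent of the chosen viewpoint. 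This step, together with identifying $\eta(\varphi)$ with the geometrically meaningful strata of $X$ via \cite{Anna, SO}, is the only non-computational ingredient, and I expect it to be the main obstacle.

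What remains is a direct computation in $H^*(F_X)=\Z[a,t]/\langle a^4,\,t^4+at^3+a^2t^2+a^3t\rangle$, with $a=c_1(\Ost_{\Proj^4}(1))|_X$ and $t=c_1(\Ost_{P(Q)}(1))$. One uses the Chern classes $c_1(\gamma^*\otimes V)=4a+t$, $c_2(\gamma^*\otimes V)=6a^2+3at+t^2$, $c_3(\gamma^*\otimes V)=4a^3+3a^2t+2at^2+t^3$ computed in \S3.2, together with $c(TX)=(1+a)^5(1+da)^{-1}$, i.e.\ $c_1(TX)=(5-d)a$, $c_2(TX)=(10-5d+d^2)a^2$, $c_3(TX)=(10-10d+5d^2-d^3)a^3$. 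Substituting these into $Tp_\A(A_3),Tp_\A(A_4),Tp_\A(C),Tp_\A(D)$ as furnished by Theorem~\ref{main_thm} (Table~\ref{table_tp3to3} for $C$ and $D$, and the stable forms for $A_3$ and $A_4$), and pushing the outcome forward along $g\colon F_X\to X$ by $\pi_*(t^3)=1$, $\pi_*(t^4)=-a$, $\pi_*(t^j)=0$ $(j\ge5)$ and then along $f\colon X\to\Proj^4$ by $f_*(a^k)=d\,a^{k+1}$ (with $\int_{\Proj^4}a^4=1$), one reads off the degree of each $\eta$-locus as a polynomial in $d$; these are the entries of Table~\ref{degreeP4}. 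By Remark~\ref{quasi_homog} all the $\A$-types here have quasi-homogeneous normal forms, so nothing obstructs the calculation; it is routine but lengthy, and is most conveniently carried out by computer algebra.
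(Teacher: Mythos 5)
Your proposal follows essentially the same route as the paper: form the $6$-dimensional flag manifold $F_X$, build the family $\varphi\colon g^*TX\to g^*\gamma^*\otimes h^*V$, substitute $c(TX)=(1+a)^5(1+da)^{-1}$ and the classes $c_i(\gamma^*\otimes V)$ from \S 3.2 into $Tp_\A(\eta)$ of Table \ref{table_tp3to3} (and the stable forms for $A_3,A_4$), and push forward via the Segre-class formulae and $f_*$; your extra discussion of admissibility via Nabarro's classification and the viewpoint section is exactly the justification the paper leaves implicit (cf.\ Remark \ref{Platonova}). Only a cosmetic caveat: $H^*(F_X)$ is not literally $\Z[a,t]/\langle a^4,\,t^4+at^3+a^2t^2+a^3t\rangle$ for a general hypersurface (there is primitive cohomology of $X$), but the computation takes place in the subring generated by $a,t$, where your relations and pushforwards are correct.
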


\begin{table}
		\centering
    		\begin{tabular}{|c|l|}\hline 
		        Type &  Degree \\ \hline \hline
		        $A_4$  & $10d(5d-12)$\\ \hline
		        $C$ & $5d(d-2)$\\ \hline
		        $D$ & $10d(4d-9)$\\ \hline
    		\end{tabular} \vspace{8pt}
		\caption{Degree of codimension $1$ loci on a $3$-fold in $\Proj^4$.}
		\label{degreeP4}
\end{table}

\begin{rem}\upshape
Since we assume that $X$ is smooth, any projection has singularities of corank at most $1$, 
so $Tp_\A(I_{2,2})$ is zero. 
Also a computation shows that 
the degree of the pushforward of $Tp_\A(A_3)$ is equal to $6d$. 
In fact, 
there are exactly six degenerate asymptotic lines at any general points $x \in X$ 
(lines having at least $4$-point contact with the hypersurface); 
all asymptotic lines at $x$ form a conic in $\Proj(T_xX) \simeq \Proj^2$, 
and moreover, degenerate asymptotic lines are given by one more equation of degree three 
(cubical form of the Monge form), 
thus by the Bezo\'ut theorem, generically six solutions are there. 
\end{rem}


\end{document}